\numberwithin{equation}{section}
\newtheorem{theorem}{Theorem}[section]
\newtheorem{lemma}[theorem]{Lemma}
\newtheorem{remark}[theorem]{Remark}
\newtheorem{definition}[theorem]{Definition}
\DeclareMathOperator*{\esssup}{ess\,sup}
\DeclareMathOperator{\Supp}{Supp}
\newcommand\B{\mathscr{B}}
\newcommand\R{\mathbb{R}}
\newcommand\C{\mathcal{C}}
\newcommand\Ss{\mathbb{S}}
\newcommand\veps{\varepsilon}
\newcommand \T{\mathcal T}
\newcommand{\boldvphi}{\boldsymbol{\vphi}} 
\newcommand{\boldphi}{\boldsymbol{\phi}}
\newcommand\y{\mathbf y}
\newcommand\z{\mathbf z}
\newcommand\Z{\mathbf Z}
\newcommand{\disp}{\displaystyle}
\newcommand{\uvect}{\mathbf{u}}
\newcommand{\vbold}{\mathbf{v}}
\newcommand{\Div}{\textnormal{div}}
\newcommand{\x}{\mathbf{x}}
\newcommand{\vphi}{\varphi}
\renewcommand{\rho}{\varrho}
\def\dx{\,\textnormal{d}x}
\def\d{\,\textnormal{d}}
\title[Compressible fluid on the domains with rough boundaries]{Asymptotic limit of the compressible 
Navier-Stokes system on domains with rough boundaries}
\author[Bhandari, Gahn,  Ne{\v{c}}asov{\'{a}},  Neuss-Radu,  Rodr{\'{\i}}guez-Bellido]
{Kuntal Bhandari$^\dagger$ \and Markus Gahn$^\ddagger$  \and {\v{S}}{\'{a}}rka Ne{\v{c}}asov{\'{a}}$^{\star}$ \and 
Maria Neuss-Radu$^{\S}$  \and 
Mar{\'{\i}}a {\'{A}}ngeles Rodr{\'{\i}}guez-Bellido$^{\P}$
 }
\thanks{$^{\dagger}$Institute of Mathematics,  Czech Academy of Sciences, \v{Z}itn\'a 25, 11567 Praha 1, Czech Republic; \\ \texttt{bhandari@math.cas.cz}}
\thanks{$^\ddagger$Interdisciplinary Center for Scientific Computing, Ruprecht-Karls-Universit\"{a}t Heidelberg
D-69120 Heidelberg, Germany; 
\texttt{markus.gahn@iwr.uni-heidelberg.de}}
\thanks{$^{\star}$Institute of Mathematics,  Czech Academy of Sciences, \v{Z}itn\'a 25, 11567 Praha 1, Czech Republic; \\ 
\texttt{matus@math.cas.cz} (Corresponding author) }
\thanks{$^\S$Department Mathematik, Friedrich-Alexander-Universit{\"{a}}t Erlangen-N{\"{u}}rnberg, D-91058 Erlangen, Germany;  \texttt{maria.neuss-radu@math.fau.de}} 
\thanks{$^\P$Departamento de Ecuaciones Diferenciales y An{\'{a}}lisis Num{\'{e}}rico and IMUS, Facultad de Matem{\'{a}}ticas, Universidad de Sevilla, (Campus de Reina Mercedes), 41012 Seville, Spain;  \texttt{angeles@us.es}}
\keywords{Compressible Navier-Stokes system; rough boundary; asymptotic limit}
\subjclass[2020]{35B40; 76N06}
\date{\today}
\begin{document}

	\begin{abstract}
In this paper, we study the asymptotic behavior of solutions to the compressible Navier-Stokes system considered on
a sequence of spatial domains, whose boundaries exhibit fast oscillations with amplitude and characteristic
wave length proportional to a small parameter. Imposing the full-slip boundary conditions we show
that in the asymptotic limit the fluid sticks completely to the boundary, provided the oscillations are non-degenerate,
meaning not oriented in a single direction.
	\end{abstract}

\maketitle

\section{Introduction}

\subsection{General setting and problem statement}

In the framework of continuum fluid mechanics, the motion of a viscous compressible flow is governed by the  Navier-Stokes system which consists of the continuity and  momentum equations, expressed as 
\begin{align}
	&\partial_t \rho + \Div_x ( \rho \uvect) = 0  ,   \label{continuity-eq} 
	\\
	&\partial_t (\rho \uvect) + \Div_x (\rho [\uvect \otimes \uvect])  + \nabla_x p(\rho)    = \Div_x  \Ss (\nabla_x \uvect)  
	 \label{momentum-eq} ,
\end{align} 
where $\uvect$ is the velocity and $\rho$ is the density of the fluid.  The (barotropic) pressure term is given by  
$$p(\rho)= a \rho^\gamma ,$$ 
for some constant $a>0$, where 
the adiabatic constant $\gamma$  verifies
\begin{align*}
	\gamma > \frac 32 .
\end{align*}
Note that, the case of monoatomic gas, that is when $\gamma=\frac 53$, is included in the above choice. 
 	  
We assume that the fluid is Newtonian, meaning that the viscous stress
tensor $\Ss$ depends linearly on the velocity's gradient. Precisely, we consider  \begin{align}\label{stress_tensor}
	\Ss(\nabla_x \uvect) = \mu \left( \nabla_x \uvect + \nabla^\top_x \uvect -\frac{2}{3} \Div_x \uvect \mathbb I  \right) +\eta \Div_x \uvect \mathbb I,
\end{align}
with shear viscosity coefficient $\mu>0$ and bulk viscosity coefficient $\eta\geq 0$.

A proper choice of boundary conditions plays a crucial role in the problems studied in
continuum fluid dynamics.  In several theoretical studies  and numerical experiments (see for instance, the survey paper by   Priezjev and Troian \cite{priezjev2006influence}), the  
standard well-accepted hypothesis states that a viscous fluid adheres completely to the boundary of the physical domain provided the latter is impermeable. This means that the Eulerian velocity   $\uvect$ 
confined to a bounded domain $\Omega \subset \mathbb R^3$ satisfies 
\begin{align}\label{boundary-imperb}
\uvect \cdot \mathbf n = 0 \ \text{ on } \partial \Omega .
\end{align} 
On the other hand, the complete adherence can be formulated in terms of the no-slip boundary condition
\begin{align}\label{no-slip-condition}
\uvect = 0 \text{ on } \partial \Omega . 
\end{align}

In the present work, we assume that all the quantities are periodic in the plane direction  $(x_1,x_2)$  with periodicity $(1,1)$. Specifically, the system \eqref{continuity-eq}--\eqref{momentum-eq} is considered on a family of bounded domains 
$\{\Omega_\veps\}_{\veps>0}$, given by 
\begin{align}\label{domain}
	\begin{dcases} 
	\Omega_\veps  = \left\{ x = (x_1, x_2, x_3) \mid (x_1,x_2)\in \T^2 , \ 0 < x_3 < 1+ \Phi_\veps(x_1, x_2) \right\}, \\
	\partial \Omega_\veps = \B \cup \Gamma_\veps , \\
	\B = \left\{  (x_1,x_2,x_3) \mid (x_1, x_2) \in \T^2 , \ x_3= 0 \right\}, \\
	\Gamma_\veps = \left\{  (x_1,x_2,x_3) \mid (x_1, x_2) \in \T^2 , \ x_3=1+\Phi_\veps(x_1, x_2) \right\} ,
	\end{dcases} 
\end{align}
where  $\T^2 = ((0,1)|_{\{0,1\}})^2$ stands for the two-dimensional torus, and  $\{\Phi_\veps\}_{\veps>0}$ is  a family of scalar functions with $\Phi_\veps\in \C^{2+\nu}(\T^2)$ for some $\nu>0$  (we refer to Section \ref{Sec-rugosity} for the rigorous construction).



We impose the  no-slip boundary condition for the velocity $\uvect$, i.e.,   
\begin{align}\label{no-slip}
	\uvect|_\B = 0 
\end{align}
on the bottom part $\B$,
and the full-slip condition 
\begin{align}\label{full-slip}
	\uvect\cdot \mathbf n|_{\Gamma_\veps} = 0 , \ \ (\Ss \mathbf n) \times \mathbf n |_{\Gamma_\veps} =0 
\end{align}
on the top part $\Gamma_\veps$.

 Under the assumption that for $\veps \to 0$, it holds
\begin{align}\label{limit-Phi-eps}
\Phi_\veps \to 0 \, \ \text{in} \ \, \C^0(\T^2) ,
\end{align}
%
our objective is to identify the  limit problem for $\veps \to 0$ of the compressible Navier-Stokes system \eqref{continuity-eq}--\eqref{momentum-eq}, where we mainly focus on the situation when, roughly speaking, the top boundary $\Gamma_\veps$ is oscillating with frequency proportional to $\frac{1}{\veps}$ and amplitude $\veps$. 


Mathematically, the problem splits into two parts: finding the limit system as $\veps\to 0$, and identifying the boundary conditions on the target domain 
\begin{align}\label{target_domain}
	\Omega = \{x=(x_1, x_2, x_3)  \mid  (x_1, x_2)\in \T^2  , \, 0<x_3< 1 \} .
\end{align}

\subsection{Bibliographic comments}

To give a rigorous mathematical justification of \eqref{no-slip-condition} based on the concept of rough or rugous boundary, several studies have been pursued over the past centuries. 
Let us start with the early mathematical works by Amirat, Simon and collaborators in \cite{AS96,AS97,Amirat-1} on the Couette flow over a rough plate and applications to {\it drag reduction}. Their approach has similarities with the boundary layers theory from \cite{ALAM99} and \cite{LIO81}
and they concentrate on small Reynolds numbers.
Modeling and computational of flow problems over
rough surfaces was studied  by  Pironneau and collaborators in the papers 
 \cite{MPV98,APV98a,APV98b}. In particular, the
stationary incompressible flow at high Reynolds number {\bf Re}
$\sim \frac{1}{\varepsilon}$ ($\veps>0$ small) over a periodic rough boundary with
the roughness period $\varepsilon$ is considered in the paper \cite{APV98b}.   

\vskip 2pt 

The main point is to indicate that the  boundary of a domain may not be perfectly smooth but contains microscopic asperities of the size significantly smaller than the characteristic length scale of the flow.  In such situations,   the ideal physical domain $\Omega$ is being replaced by a family $\{\Omega_\veps\}_{\veps>0}$ of rough domains, where the parameter $\veps>0$ stands for the amplitude of asperities.  Considering  only the {\em impermeability condition} \eqref{boundary-imperb}  on $\partial \Omega_\veps$,  one can eventually show that the {\em no-slip  boundary condition} must be imposed
for the limit problem when $\Omega_\veps \to \Omega$ (in some sense), provided the distribution of asperities
is uniform, more specifically spatially periodic, and non-degenerate (see \cite[Theorem 1]{Casado-Diaz}). 
%


\vskip 2pt 

 First mathematically rigorous derivation of Navier's slip condition is due to J\"ager and Mikeli\'c in \cite{Mikelic-Jager}. They considered the stationary Navier-Stokes equations for flow governed by a pressure drop and no-slip boundary conditions on a rough part of the boundary. The derivation
is based on the approach from \cite{JM96} and \cite{JM00} in obtaining the {\it interface laws} between porous media and an
unconfined fluid flow.  The auxiliary problems from \cite{JM96}, used to calculate the effective coefficients for the Beavers and Joseph interface law, were slightly modified and used in obtaining the effective Navier's
slip condition. 
The Couette flow around a rough boundary (riblets) was treated in \cite{JM01}. The rigorous derivation of the Navier's slip condition for a general incompressible viscous flow over a rapidly oscillating surface,  was given in \cite{Mikelic_et_al} by Mikeli\'{c}, Ne\v{c}asov\'{a} and Neuss-Radu.
%
All these results were obtained for
{\it periodic} rough boundaries. In fact, realistic natural rough
boundaries are {\it random}  and for study of flows in such geometries we refer to the works of G\'erard-Varet and
collaborators \cite{Basson-Gerard,GV09,GVM10}.



\vspace*{.2cm}

For incompressible Navier-Stokes systems on the rough domains like \eqref{domain},  Bucur et al. \cite{Bucur-Feireisl-Necasova-Wolf} proved that the sequence of velocities $\{\uvect_\veps\}_{\veps}$ is bounded in $L^2(0,T; 
 W^{1,2}(\Omega_\veps; \mathbb R^3))$ and that
 \begin{align} 
 \uvect_\veps \to \uvect \ \text{ in } L^2(0,T; 
 W^{1,2}(\Omega; \mathbb R^3)) \, \text{ as } \, \veps \to 0.
 \end{align} 
In fact, starting with the boundary conditions \eqref{no-slip}--\eqref{full-slip} for the quantities $\uvect_\veps$, the authors in \cite{Bucur-Feireisl-Necasova-Wolf} have shown that the limiting velocity $\uvect$ satisfies the no-slip boundary conditions on both parts of the boundary.

Of course, such a result holds under some certain non-degeneracy condition on 
$\{\Phi_\veps\}_{\veps}$, more specifically, both $\partial_{x_1}\Phi_\veps$ and $\partial_{x_2}\Phi_\veps$ are not identically zero in $\T^2$.  
It was observed in \cite{Bucur-Feireisl-Necasova-Wolf} that such a phenomenon is intimately related to the character of oscillations of the vector fields $\{\nabla_{\overline{x}} \Phi_\veps \}_{\veps>0}$ ($\overline{x}=(x_1,x_2)\in \T^2$) 
associated to the directional fluctuations of the normal vectors to $\Gamma_\veps$. A sufficient condition for \eqref{no-slip}--\eqref{full-slip} to imply \eqref{no-slip-condition}
in the asymptotic limit can be expressed in terms of  the ``rugosity measure", which is simply a Young measure
associated to the family $\{\nabla_{\overline{x}} \Phi_\veps \}_{\veps>0}$. 
More specifically, conditions  \eqref{no-slip}--\eqref{full-slip} give rise
to \eqref{no-slip-condition} in the asymptotic limit provided the support of each
``measure"  contains at least two linearly independent vectors in $\mathbb R^2$ (see \cite[Theorem 4.1]{Bucur-Feireisl-Necasova-Wolf}).

For the case of stationary incompressible Navier-Stokes system,  a similar analysis can be found in  \cite{Bucur-Feireisl-Necasova-ribbed},  when rugosity is degenerate only 
in one direction; in other words, the support of each measure is contained in a
linear subspace of $\mathbb R^2$. 
We also mention  the work \cite{Bucur-Feireisl} by Bucur \& Feireisl where  the authors analyzed the incompressible limit of Navier-Stokes-Fourier system on condition
that the boundary of the spatial domain oscillates with the amplitude and wave length
proportional to the Mach number.   It is worth noting that, in the case of {\em friction-driven boundary conditions} for  incompressible Navier-Stokes systems on the domains with rough boundaries, the authors in  \cite{Friction-Driven-bucur-et-al} studied 
the asymptotic  limit  by means of
$\Gamma$-convergence arguments.

\vspace*{.2cm}

Unlike the other related works, in the present paper, we perform the analysis in the context of compressible Navier-Stokes system.  In particular, we need to use here the concept of Bogovskii operator to obtain the uniform pressure estimate w.r.t. $\veps$, and  the no-slip boundary condition is obtained from the uniform a-priori bound for $\uvect_{\veps}$ and Lemma \ref{Lemma-boundary}. However, of course the technique to obtain the a-priori estimate is different for the compressible case than the incompressible one \cite{Bucur-Feireisl-Necasova-Wolf}.






\section{Geometry of the physical space}\label{Sec-rugosity}

Recall the family of domains  $\{\Omega_\veps\}_{\veps>0}$   introduced in \eqref{domain}, and let us  
  explicitly  consider the family 
$\{\Phi_\veps\}_{\veps>0}$ as  follows:
\begin{align}\label{func-Phi_veps}
\Phi_{\veps} (x_1, x_2) = \veps \, \Phi \bigg(\frac{x_1}{\veps}, \frac{x_2}{\veps}  \bigg)
 \quad \forall (x_1, x_2) \in \T^2 ,
 \end{align} 
where  
$\Phi \in \C^{2+\nu}(\mathbb R^2)$ is a positive function which is periodic w.r.t. the  variables $(\frac{x_1}{\veps}, \frac{x_2}{\veps})$ with period $(1,1)$.
%
		%
 We consider the function  $\Phi$ such that 
\begin{align}\label{func-phi-prop}
\left|\Phi(\y) - \Phi(\z)  \right| \leq L |\y- \z|  \  \text{ for any } \y, \z \in \T^2, 
\end{align}
with constant  $L>0$.  This implies that the family $\{\Phi_\veps\}_{\veps>0}$ is equi-bounded and equi-Lipschitz with the same constant $L$ from \eqref{func-phi-prop}.

 Further, we assume that  $\Phi$ varies in any direction $\y\in \T^2$, at least at one point $\mathbf z \in \T^2$, meaning that 
\begin{align}\label{condition-on-Phi}
    \forall \y \in \T^2 , \  \y \neq 0, \ \text{there exists $\mathbf z \in \T^2$ and $c\in \mathbb R$ such that } \Phi(\mathbf z + c \y) \neq \Phi(\y). 
\end{align}

\begin{remark} 
Note that we have the following equivalent property: For $\boldsymbol{\xi} \in \R^3$ with $\boldsymbol{\xi} \cdot  \mathbf n|_{\color{red}\Gamma_\veps} = 0  $ on $\Gamma_\veps$ it follows $\boldsymbol{\xi} = 0$ or equivalently $\mathrm{span}\{\mathbf n|_{\Gamma_\veps}(\x) \, : \, \x \in \Gamma_\veps \} = \R^3 $.
\end{remark}

Under the above assumption, we recall the following result which has been addressed for instance in \cite[Proposition 5.1]{Bucur-Feireisl}.   
%
\begin{lemma}[Characterization of uniform rugosity effect]\label{Lemma-boundary}
Let $\Omega_\veps \subset \mathbb R^3$ be the family of domains defined through \eqref{domain} where $\Phi_\veps$ is introduced in \eqref{func-Phi_veps}--\eqref{func-phi-prop} and $\Phi$ satisfies \eqref{condition-on-Phi}.

Then there exists $\veps_0>0$, $c_1>0$ independent of $\veps \in (0,\veps_0)$, such that 
\begin{align}\label{boundary-v}
\int_{\{x_3=1\}} |\vbold|^2 \d \sigma \leq \veps c_1  \| \nabla_x \vbold\|^2_{L^2(\Omega_\veps; \mathbb R^{3\times 3})} \quad \text{for all } \veps \in (0,\veps_0) ,
\end{align} 
 for any $\vbold \in W^{1,2}(\Omega_\veps; \mathbb R^3)$ with 
 $$  \vbold \cdot \mathbf n|_{\{x_3 = 1+ \Phi_\veps(x_1, x_2) \}} = 0 .$$
 \end{lemma}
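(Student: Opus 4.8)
The plan is to \emph{localize} the estimate to a single $\veps$-periodicity cell, \emph{rescale} that cell to a $\veps$-independent reference configuration in which the rough boundary no longer depends on $\veps$, prove a Poincar\'e-type inequality there, and then \emph{sum} the local estimates over the $O(\veps^{-2})$ cells. Since $\Phi_\veps(\overline x)=\veps\,\Phi(\overline x/\veps)$ with $\Phi$ being $\mathbb Z^2$-periodic, the mere requirement that $\Phi_\veps$ descend to $\T^2$ forces $1/\veps\in\mathbb N$; write $N:=1/\veps$ and, for $k\in\{0,\dots,N-1\}^2$, set $Y_\veps^k:=\veps\bigl(k+(0,1)^2\bigr)$ and consider the column
\[
Q_\veps^k:=\bigl\{\,x : (x_1,x_2)\in Y_\veps^k,\ 1-\veps<x_3<1+\Phi_\veps(x_1,x_2)\,\bigr\}\subset\Omega_\veps\qquad(0<\veps<1).
\]
Under the anisotropic dilation $x=T_k(z):=\bigl(\veps(z_1+k_1),\,\veps(z_2+k_2),\,1-\veps+\veps z_3\bigr)$ the set $Q_\veps^k$ is the image of the fixed (bounded Lipschitz, connected) reference cell $\widehat Q:=\bigl\{\,z:(z_1,z_2)\in(0,1)^2,\ 0<z_3<1+\Phi(z_1,z_2)\,\bigr\}$, the flat piece $Q_\veps^k\cap\{x_3=1\}$ is the image of $(0,1)^2\times\{1\}$, and the rough boundary $\Gamma_\veps\cap\overline{Q_\veps^k}$ is the image of $\widehat\Gamma:=\{\,z_3=1+\Phi(z_1,z_2)\,\}$ \emph{with the unit normal preserved exactly}, since $\nabla_{\overline x}\Phi_\veps(\overline x)=(\nabla\Phi)(\overline x/\veps)$. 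Hence, writing $w:=\vbold\circ T_k\in W^{1,2}(\widehat Q;\R^3)$, the condition $\vbold\cdot\mathbf n|_{\Gamma_\veps}=0$ becomes $w\cdot\widehat{\mathbf n}|_{\widehat\Gamma}=0$, while tracking Jacobians gives $\nabla_z w=\veps\,(\nabla_x\vbold)\circ T_k$ and
\[
\int_{(0,1)^2\times\{1\}}|w|^2\,\d\sigma=\veps^{-2}\!\!\int_{Y_\veps^k\times\{1\}}|\vbold|^2\,\d\sigma,\qquad \int_{\widehat Q}|\nabla_z w|^2\,\d z=\veps^{-1}\!\!\int_{Q_\veps^k}|\nabla_x\vbold|^2\,\d x.
\]

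The core of the argument is the $\veps$-independent inequality on $\widehat Q$:
\[
\int_{(0,1)^2\times\{1\}}|w|^2\,\d\sigma\ \le\ C_0\int_{\widehat Q}|\nabla_z w|^2\,\d z\qquad\text{for all }w\in W^{1,2}(\widehat Q;\R^3)\text{ with }w\cdot\widehat{\mathbf n}|_{\widehat\Gamma}=0,
\]
with $C_0$ depending only on $\Phi$. I would obtain it by combining the standard trace inequality on the Lipschitz subdomain $(0,1)^2\times(0,1)\subset\widehat Q$ (which bounds $\int_{(0,1)^2\times\{1\}}|w|^2$ by $\|w\|_{W^{1,2}(\widehat Q)}^2$) with the Poincar\'e-type bound $\|w\|_{L^2(\widehat Q)}\le C\,\|\nabla_z w\|_{L^2(\widehat Q)}$ on the closed subspace $\{w\cdot\widehat{\mathbf n}|_{\widehat\Gamma}=0\}$. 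The latter is the only genuinely nontrivial ingredient, and I would prove it by contradiction: a normalized sequence $w_n$ with $\|\nabla_z w_n\|_{L^2(\widehat Q)}\to0$ is bounded in $W^{1,2}(\widehat Q;\R^3)$, hence by Rellich converges strongly in $L^2(\widehat Q)$ to a constant vector $\xi$ (connectedness of $\widehat Q$) with $|\xi|\,|\widehat Q|^{1/2}=1$; continuity of the trace operator $W^{1,2}(\widehat Q)\to L^2(\widehat\Gamma)$ passes the constraint to the limit, so $\xi\cdot\widehat{\mathbf n}\equiv0$ on $\widehat\Gamma$; but by the non-degeneracy hypothesis \eqref{condition-on-Phi}, in the equivalent form $\mathrm{span}\{\widehat{\mathbf n}(z):z\in\widehat\Gamma\}=\R^3$ recorded in the Remark above, this forces $\xi=0$ --- a contradiction. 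This compactness step, together with the purely geometric fact that a constant $\xi$ with $\xi\cdot\widehat{\mathbf n}\equiv0$ must vanish, is where all the substance lies; everything else is bookkeeping, and all constants produced here depend only on $\Phi$, never on $\veps$ or $k$.

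Finally I would undo the scaling. Applying the reference inequality to $w=\vbold\circ T_k$ and multiplying by $\veps^2$ yields, for each $k\in\{0,\dots,N-1\}^2$,
\[
\int_{Y_\veps^k\times\{1\}}|\vbold|^2\,\d\sigma\ \le\ C_0\,\veps\int_{Q_\veps^k}|\nabla_x\vbold|^2\,\d x.
\]
Because $\{Y_\veps^k\}_k$ tiles $\T^2$ and $\{Q_\veps^k\}_k$ tiles the layer $\{\,1-\veps<x_3<1+\Phi_\veps\,\}\subset\Omega_\veps$ (up to Lebesgue-null sets), summation over $k$ gives
\[
\int_{\{x_3=1\}}|\vbold|^2\,\d\sigma\ \le\ C_0\,\veps\int_{\Omega_\veps}|\nabla_x\vbold|^2\,\d x,
\]
which is \eqref{boundary-v} with $c_1:=C_0$ and any $\veps_0\in(0,1)$.
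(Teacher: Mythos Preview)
Your argument is correct. Note, however, that the paper does not supply its own proof of this lemma: it simply recalls the result from \cite[Proposition~5.1]{Bucur-Feireisl}, and the subsequent Remark points to the Poincar\'e inequality of \cite[Lemma~5.1]{fabricius2023homogenization} together with ``a simple decomposition argument for the rough part $\{1<x_3<\Phi_\veps(x_1,x_2)\}$'' as a route to the (stronger) localized estimate. What you have written is precisely that route spelled out in full: the cell decomposition $\{Q_\veps^k\}_k$, the anisotropic rescaling $T_k$ to the fixed reference cell $\widehat Q$, the Poincar\'e-type inequality on $\widehat Q$ obtained by Rellich compactness and the non-degeneracy condition \eqref{condition-on-Phi} in its equivalent form $\mathrm{span}\{\widehat{\mathbf n}(z):z\in\widehat\Gamma\}=\R^3$, and the summation over cells. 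Your scaling bookkeeping is correct, and in fact your argument already yields the sharper localized bound with $\Omega_\veps$ replaced by the thin layer $\{1-\veps<x_3<1+\Phi_\veps\}$ (even slightly more than the Remark claims, since you include the slab $\{1-\veps<x_3<1\}$ below the interface). The implicit restriction $1/\veps\in\mathbb N$ that you invoke is indeed forced by the requirement that $\Phi_\veps$ descend to $\T^2$, so there is no loss there.
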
 
 

\begin{remark} 
We emphasize that under the conditions of Lemma \ref{Lemma-boundary} one can also obtain the  stronger estimate
\begin{align*}
    \int_{\{x_3=1\}} |\vbold|^2 \d \sigma \leq \veps c_1  \| \nabla_x \vbold\|^2_{L^2(\{1<x_3<\Phi_{\veps} (x_1, x_2)\}; \mathbb R^{3\times 3})}.
\end{align*}
This follows easily from the Poincar\'e inequality in \cite[Lemma 5.1]{fabricius2023homogenization}, the proof of which is obviously valid in our case, and a simple decomposition argument for the rough part $\{1<x_3<\Phi_{\veps} (x_1, x_2)\}$.
\end{remark} 

\begin{remark}
The condition \eqref{condition-on-Phi} is intimately related to the characterization of the vector fields $\{\nabla_{\overline{x}} \Phi_\veps\}_{\veps>0}$. 

In this regard, let us introduce the notion of rugosity. 
A  ``measure of rugosity'', denoted by  $\{\mathcal R_{\y}\}_{\y\in \T^2}$, is  a Young measure associated to the family of gradients $\{\nabla_{\overline{x}} \Phi_\veps  \}_{\veps>0}$. To be more precise,   $\{\mathcal R_{\y}\}_{\y\in \T^2}$ is a family of probability measures on $\R^2$ depending measurably on $\y$  such that 
\begin{align}\label{func-caratheodory-G}  
\int_{\R^2} G(\y, \Z) \d \mathcal R_{\y}(\Z) = \textnormal{weak} \, \lim_{\veps \to 0} G(\y, \nabla_{\overline{x}} \Phi_\veps)   \, \ \text{for a.a. } \, \y \in \T^2, 
\end{align} 
for any Carath\'eodory function $G: \T^2 \times \mathbb R^2\to \mathbb R$ (see Theorem 6.2 by  Pedregal \cite{book-Pedregal}).

Then, the non-degeneracy condition on the family  $\{\Phi_\veps\}_{\veps>0}$, in terms of rugosity measure,  can be interpreted as follows:  
we say that a rugosity measure $\{\mathcal R_\y\}_{\y \in \T^2}$ is non-degenerate if  $\,\Supp [\mathcal R_\y]$ contains two linearly  independent vectors in $\R^2$.  Indeed, it is worth mentioning that  the equivalence  between this  non-degeneracy condition and the condition \eqref{condition-on-Phi} has been proved in \cite[Corollary 4.1]{Bucur-Feireisl-Necasova-Wolf}.   For more details about rugosity measure, we refer \cite{Bucur-Feireisl-Necasova-Wolf}; see also \cite{Bucur-Feireisl-Necasova-ribbed}. 
\end{remark}

\section{Weak solutions in  oscillatory domains}

\begin{definition}\label{Defi-sol-main}
	We say that $(\rho_\veps, \uvect_\veps)$ is a weak solution to the Navier-Stokes system \eqref{continuity-eq}--\eqref{momentum-eq}  on the domain $(0,T)\times \Omega_\veps$, supplemented with the boundary conditions \eqref{no-slip}--\eqref{full-slip},
	  and initial conditions (independent of $\veps)$
	\begin{align}\label{initial}
 \begin{dcases}
	\rho_\veps(0, \cdot) = \rho_{0}  \geq 0   \,  \text{ with } \, \rho_0 \in L^\gamma(\Omega_\veps) , \,
 \text{ and}  \\
 (\rho_\veps \uvect_\veps)(0, \cdot) = \mathbf m_0   \, \text{ with } \, \frac{|\mathbf m_0|^2}{\rho_0} \in L^1(\Omega_\veps) 
 \end{dcases} 
\end{align}
(where $\rho_0$ and $\mathbf m_0$ are taken to be $0$ in $\Omega_\veps \setminus \Omega$), 
if the following holds:
\begin{itemize}

\item $\rho_\veps$ and $\uvect_\veps$ are periodic in the direction $(x_1, x_2)\in \T^2$ with periodicity $(1,1)$. 

	\item $\rho_\veps \geq 0$ a.e. in $ (0,T) \times \Omega_\veps$,  $\rho_\veps \in L^\infty(0,T; L^\gamma(\Omega_\veps))$.  
	
	\item  $\uvect_\veps \in L^2(0,T; W^{1,2}(\Omega_\veps; \mathbb R^3))$,  $\rho_\veps \uvect_\veps  \in L^\infty(0,T; L^{\frac{2\gamma}{\gamma+1}}(\Omega_\veps; \mathbb R^3))$.
	
	\item 
	The continuity equation holds
	\begin{align}\label{weak-for-con}
		- \int_0^T \int_{\Omega_\veps} \left( \rho_\veps  \partial_t \vphi 
		+ \rho_\veps  \uvect_\veps  \cdot \nabla_x \vphi\right) = \int_{\Omega_\veps} \rho_{0} \vphi(0, \cdot) 
	\end{align}
	for any test function $\vphi \in \C^1(\overline{ (0,T)\times \Omega_\veps})$ with $\vphi(T, \cdot) =0$ in $\Omega_\veps$  and periodic in the direction $(x_1, x_2)\in \T^2$ with periodicity $(1,1)$.
	
 Moreover, the equation \eqref{continuity-eq} is satisfied in the sense of renormalized solutions introduced by DiPerna and Lions \cite{Lions-Diperna}: for a.a. $t\in (0,T)$, it holds 
\begin{align}\label{weak-renorm}
&\int_{\R^3} b(\rho_\veps(t))\vphi(t, \cdot)	-	\int_0^t \int_{\R^3}  b(\rho_\veps ) \left( \partial_t \vphi +     \uvect_\veps  \cdot \nabla_x \vphi \right) \notag \\
	&=  \int_{\R^3} b(\rho_{0})  \vphi(0, \cdot) -\int_0^t \int_{\R^3} \left[\rho_\veps b^\prime(\rho_\veps) - b(\rho_\veps) \right] \Div_x \uvect_\veps \vphi 
\end{align}
for any   test function $\vphi\in \mathcal D([0,T) \times \R^3)$ and any   $b \in \C^1([0,\infty))$, where $\rho_\veps, \uvect_\veps$ and $\rho_0$ are extended to be zero outside $\Omega_\veps$.  


\item The momentum equation is satisfied in the following sense
	\begin{align}\label{weak_form_momen}
		-\int_0^T \int_{\Omega_\veps} \left( \rho_\veps \uvect_\veps \cdot \partial_t  \boldvphi   +  \rho_\veps[\uvect_\veps \otimes \uvect_\veps] : \nabla_x  \boldvphi + p(\rho_\veps) \Div_x \boldvphi   \right)     - \int_{\Omega_\veps}   \mathbf m_0\cdot  \boldvphi(0, \cdot) \nonumber \\
		= - \int_0^T \int_{\Omega_\veps} 
		 \Ss(\nabla_x \uvect_\veps) : \nabla_x \boldvphi 	
	\end{align}
for  any test function $\boldvphi \in \C^1( \overline{(0,T) \times \Omega_\veps} ; \mathbb R^3)$, periodic  in $(x_1, x_2)\in \T^2$ with periodicity $(1,1)$,   satisfying 
\begin{align*}
	\boldvphi(T,\cdot)=0 \  \, \text{in} \ \, \Omega_\veps, \ \,  \boldvphi|_{\B} = 0  \ \, \text{and} \, \  	\boldvphi \cdot \mathbf n |_{\Gamma_\veps} = 0 .
\end{align*}

\item The  energy inequality  reads as  
\begin{align} 
\label{energy-ineq}
	\int_{\Omega_\veps}\left(\frac{1}{2} \rho_\veps |\uvect_\veps|^2 + P(\rho_\veps) \right)(\tau, \cdot)    + \int_0^\tau \int_{\Omega_\veps}   \left(\frac{\mu}{2} \Big| \nabla_x \uvect_\veps + \nabla^\top_x \uvect_\veps -\frac{2}{3} \Div_x \uvect_\veps \mathbb{I} \Big|^2 + \eta  |\Div_x \uvect_\veps \mathbb{I}|^2\right) \nonumber \\
	  \leq  	\int_{\Omega_\veps}\left( \frac{|\mathbf m_0|^2}{2\rho_{0}} + P(\rho_{0})  \right) 
	\end{align}
for a.a. $\tau \in [0,T]$, where 
\begin{align}\label{func-P}
	P(\rho_\veps) = \rho_\veps \int_1^{\rho_\veps} \frac{p(z)}{z^2}.
\end{align}
	
	\end{itemize}

\end{definition}

We have the following global (weak) existence result which 
 follows 
from the general theory developed in \cite{Feireisl2001existence}.  
\begin{theorem}\label{Thm-weak-sol} 
Let $\veps>0$ and $\Omega_\veps \subset \mathbb R^3$ be given by \eqref{domain} with the upper part determined by a positive function $\Phi_\veps \in \C^{2+\nu}(\T^2)$ as described in Section \ref{Sec-rugosity}. Then, the Navier-Stokes system \eqref{continuity-eq}--\eqref{momentum-eq} with initial conditions \eqref{initial} and boundary conditions \eqref{no-slip}--\eqref{full-slip} admits a globally defined weak solution in the sense of Definition \ref{Defi-sol-main}.    
\end{theorem}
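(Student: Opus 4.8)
The plan is to reduce the assertion to the now-classical existence theory for the barotropic compressible Navier--Stokes system developed by Lions and Feireisl--Novotn\'y--Petzeltov\'a, as packaged in \cite{Feireisl2001existence}. The only features that distinguish our setting from the standard one are (i) the mixed boundary conditions (no-slip on $\B$, full-slip/Navier on $\Gamma_\veps$) together with the spatial periodicity in $(x_1,x_2)$, and (ii) the regularity of the oscillatory domain $\Omega_\veps$. Since $\veps>0$ is fixed throughout this theorem, neither of these is an obstruction: the boundary $\Gamma_\veps$ is a graph of a $\C^{2+\nu}$ function and $\B$ is flat, so $\partial\Omega_\veps$ is Lipschitz (in fact $\C^{2+\nu}$ away from possible edges, but Lipschitz suffices), and the periodicity can be handled by working on the ``horizontal torus'' $\T^2\times(0,1+\Phi_\veps)$, which is a compact manifold-with-boundary in the relevant sense. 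First I would set up the function spaces: the natural velocity space is the closure in $W^{1,2}$ of the smooth periodic fields vanishing on $\B$ and tangent to $\Gamma_\veps$, and I would record that on this space Korn's inequality holds (because the velocity is anchored on the flat piece $\B$ of positive measure), so that the viscous dissipation in \eqref{energy-ineq} controls $\|\nabla_x\uvect_\veps\|_{L^2}^2$; this is what makes the energy estimate closed.

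Next I would run the standard three-level approximation scheme on $\Omega_\veps$: (a) a Faedo--Galerkin approximation of the momentum equation in a basis of the velocity space adapted to the mixed boundary conditions, (b) an artificial pressure regularization $p_\delta(\rho)=a\rho^\gamma+\delta\rho^\beta$ with $\beta$ large, and (c) a vanishing-viscosity term $\veps_0\Delta\rho$ in the continuity equation together with the associated Neumann-type boundary condition for $\rho$. For fixed $\veps$ this yields approximate solutions satisfying the approximate energy inequality; the a priori bounds following from \eqref{energy-ineq} (with $P$ as in \eqref{func-P}) then give $\rho_\veps\in L^\infty(0,T;L^\gamma)$, $\sqrt{\rho_\veps}\,\uvect_\veps\in L^\infty(0,T;L^2)$, and $\uvect_\veps\in L^2(0,T;W^{1,2})$ uniformly in the approximation parameters. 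The passage to the limit $\veps_0\to0$, then $\delta\to0$, is carried out exactly as in \cite{Feireisl2001existence}: one proves the $L^{\gamma+\theta}$-type higher integrability of the density via testing the momentum equation with $\boldvphi=\psi(t)\mathcal{B}[\rho-\langle\rho\rangle]$ where $\mathcal{B}$ is a Bogovskii-type operator on $\Omega_\veps$ (available since $\Omega_\veps$ is a bounded Lipschitz domain, and adaptable to the periodic-in-$(x_1,x_2)$ geometry), and one identifies the pressure limit through the effective viscous flux identity and the DiPerna--Lions renormalized-continuity machinery, which delivers strong convergence of the density and hence \eqref{weak-for-con}, \eqref{weak-renorm}, \eqref{weak_form_momen}.

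The main obstacle I anticipate is purely bookkeeping rather than conceptual: one must check that every tool in the Lions--Feireisl program that is usually quoted for a smooth bounded domain with a single homogeneous boundary condition survives in the present mixed-boundary, partially-periodic geometry. Concretely, the points requiring care are: the validity of the Bogovskii operator with the right continuity estimates on $\Omega_\veps$ (including the behaviour near the ``corner'' where $\Gamma_\veps$ meets the lateral periodicity identification, which is in fact smooth because $\Phi_\veps$ is periodic, so there is no genuine corner); the fact that the boundary term $\int_{\Gamma_\veps}(\Ss\mathbf n)\times\mathbf n\cdot\boldvphi$ arising from integration by parts vanishes for admissible test functions, so that the weak momentum formulation \eqref{weak_form_momen} has no boundary contribution; and the compactness of the trace operator $W^{1,2}(\Omega_\veps)\hookrightarrow L^2(\Gamma_\veps)$, needed to pass to the limit in the slip constraint $\uvect_\veps\cdot\mathbf n|_{\Gamma_\veps}=0$. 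Once these routine verifications are in place, the conclusion of Theorem \ref{Thm-weak-sol} follows verbatim from \cite{Feireisl2001existence}; I would therefore present the proof as a reduction, stating the needed lemmas (Korn on the mixed-boundary space, Bogovskii on $\Omega_\veps$, trace compactness) and then invoking the general theory, rather than reproducing the multi-level limit passage in full.
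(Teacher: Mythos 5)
Your proposal matches the paper's own (very brief) proof: both reduce Theorem \ref{Thm-weak-sol} to the Feireisl existence theory of \cite{Feireisl2001existence} via the standard three-level scheme (Faedo--Galerkin for the momentum equation, artificial viscosity $\varsigma\Delta_x\rho_\veps$ in the continuity equation with homogeneous Neumann data, artificial pressure $\delta\rho_\veps^\beta$ with $\beta$ large, then $\varsigma\to 0$ followed by $\delta\to 0$), relying on the fact that for fixed $\veps$ the domain $\Omega_\veps$ is a bounded Lipschitz graph domain on which Korn and Bogovski\u{\i} are available. Your additional remarks on the mixed no-slip/full-slip boundary conditions, the periodic identification, and the trace compactness are exactly the routine verifications the paper implicitly defers to the cited reference, so the argument is sound and equivalent to the one in the paper.
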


\begin{proof}
Let us give a short explanation of the proof for Theorem \ref{Thm-weak-sol}.   Based on \cite{Feireisl2001existence} (see also \cite{Feireisl-Novotny-book}), 
it is necessary to consider the continuity equation with a viscous term $\varsigma \nabla_x \rho_\veps$ and introduce artificial pressure term $\delta \rho^{\beta}_\veps$ for $\beta \geq 4$ in the momentum equation, where $\varsigma $ and $\delta$ are small positive parameters.   More precisely, one needs to consider the system 
\begin{align}
	&\partial_t \rho_\veps + \Div_x ( \rho_\veps \uvect_\veps) = \varsigma  \Delta_x \rho_\veps  ,   \label{continuity-eq-epsilon} 
	\\
	&\partial_t (\rho_\veps \uvect_\veps) + \Div_x (\rho_\veps [\uvect_\veps \otimes \uvect_\veps])  + \nabla_x \left( p(\rho_\veps) + \delta \rho^\beta_\veps \right) +\varsigma \nabla_x \rho_\veps \nabla_x \uvect_\veps   = \Div_x  \Ss (\nabla_x \uvect)  
	 \label{momentum-eq-epsilon} ,
\end{align} 
where the equation \eqref{continuity-eq-epsilon} is complemented with the homogeneous Neumann boundary condition
\begin{align*}
\nabla_x \rho_\veps \cdot \mathbf n = 0 \ \text{ on } (0,T) \times \partial \Omega_\veps . 
\end{align*}
The momentum equation is solved via  a Faedo-Galerkin approximation and  then we let $\varsigma \to 0$ to get rid of the artificial viscosity term. In the final step, we need to pass to the limit $\delta \to 0$ to remove the artificial pressure term $\delta \rho^\beta_\veps$. Of course, throughout the process  we need to find  several uniform bounds  w.r.t. $\varsigma$ and $\delta$ as described in \cite{Feireisl2001existence}.
\end{proof}

\section{Weak solutions in the target domain and main result}

Let us define  the notion of weak solution in the target spatial domain $\Omega$ (given by \eqref{target_domain}). 
\begin{definition}\label{Def-limit-sol}
	We say that $(\rho, \uvect)$ is a weak solution to the Navier-Stokes system \eqref{continuity-eq}--\eqref{momentum-eq}  on $(0,T)\times \Omega$, supplemented with the no-slip 
	boundary condition 
	\begin{align}\label{no-slip-2}
	\uvect |_{\partial \Omega} = 0  ,
\end{align}
	 and initial conditions 
\begin{align}\label{initial-2}
 \begin{dcases}
	\rho(0, \cdot) = \rho_{0}     \, \text{ with } \ \rho_0 \in L^\gamma(\Omega) \, 
 \text{ and}  \\
 (\rho \uvect)(0, \cdot) = \mathbf m_0  \, \text{ with } \, \frac{|\mathbf m_0|^2}{\rho_0} \in L^1(\Omega) ,
 \end{dcases} 
\end{align}
if the following holds:
\begin{itemize}

   \item $\rho$ and $\uvect$ are periodic in the direction $(x_1, x_2)\in \T^2$ with periodicity $(1,1)$. 

	\item $\rho \geq 0$ a.e. in $(0,T)\times \Omega$,  $\rho \in L^\infty(0,T; L^\gamma(\Omega))$.  
	
	\item  $\uvect \in L^2(0,T; W^{1,2}(\Omega; \mathbb R^3))$,  $\rho \uvect \in L^\infty(0,T; L^{\frac{2\gamma}{\gamma+1}}(\Omega ; \mathbb R^3))$.
	
	\item 
The continuity equation is satisfied  
 \begin{align}\label{weak-for-con-2}
		- \int_0^T \int_{\Omega} \left( \rho  \partial_t \vphi 
		+ \rho  \uvect_\veps  \cdot \nabla_x \vphi\right) = \int_{\Omega} \rho_{0} \vphi(0, \cdot) 
	\end{align}
	for any test function $\vphi \in \C^1(\overline{ (0,T)\times \Omega})$ with $\vphi(T, \cdot) =0$ in $\Omega$  and periodicity $(1,1)$ in the direction $(x_1, x_2)\in \T^2$.


 Moreover, the continuity equation \eqref{continuity-eq} holds in the sense of renormalized solutions, given by 
	\begin{align}\label{weak-renorm-2}
	&\int_{\R^3} b(\rho(t))\vphi(t, \cdot)	-	\int_0^t \int_{\R^3}  b(\rho ) \left( \partial_t \vphi +     \uvect  \cdot \nabla_x \vphi \right) \notag \\
	&=  \int_{\R^3} b(\rho_{0})  \vphi(0, \cdot) -\int_0^t \int_{\R^3} \left[\rho b^\prime(\rho) - b(\rho) \right] \Div_x \uvect \vphi 
	\end{align}
	for a.a. $t\in (0,T)$,  for any   test function $\vphi\in \mathcal D([0,T) \times \R^3)$ and any   $b \in \C^1([0,\infty))$, where $\rho, \uvect$ and $\rho_0$ are extended to be zero outside $\Omega$.

	\item The momentum equation
 is satisfied in the following sense
		\begin{align}\label{weak_form_momen-2}
			-\int_0^T \int_{\Omega} \left( \rho \uvect \cdot \partial_t  \boldvphi   +  \rho[\uvect \otimes \uvect] : \nabla_x  \boldvphi + p(\rho) \Div_x \boldvphi   \right)     - \int_{\Omega}   \mathbf m_0 \cdot  \boldvphi(0, \cdot) \notag \\
			= - \int_0^T \int_{\Omega} 
			\Ss(\nabla_x \uvect) : \nabla_x \boldvphi 	
		\end{align}
	for  any test function $\boldvphi\in \C^1(\overline{(0,T)\times \Omega}; \mathbb R^3)$, periodic in $(x_1, x_2)\in \T^2$ with periodicity $(1,1)$, satisfying 
	\begin{align*}
		\boldvphi(T,\cdot)=0  \, \ \text{in} \ \, \Omega \ \, \text{and}  \, \ \boldvphi|_{\partial \Omega} = 0 . 
	\end{align*}
	
	\item Lastly, the  energy inequality reads  as 
	\begin{align}  
		\label{energy-ineq-2}
			\int_{\Omega}\left(\frac{1}{2} \rho |\uvect|^2 + P(\rho) \right)(\tau, \cdot)    + \int_0^\tau \int_{\Omega}   \left(\frac{\mu}{2} \Big| \nabla_x \uvect + \nabla^\top_x \uvect -\frac{2}{3} \Div_x \uvect \mathbb{I} \Big|^2 + \eta  |\Div_x \uvect \mathbb{I}|^2\right) \notag  \\
			\leq  	\int_{\Omega}\left( \frac{|\mathbf m_0|^2}{2\rho_{0}} + P(\rho_{0})  \right) 
		\end{align} 
	for a.a. $\tau \in [0,T]$, where
\begin{align}\label{func-P}
	P(\rho) = \rho \int_1^{\rho} \frac{p(z)}{z^2}.
\end{align}
\end{itemize}
\end{definition}


\vspace*{.3cm}


Let us write the main result of the present article.

\begin{theorem}[Main result]\label{Main-Theorem}
    Let $\{\Omega_\veps\}_{\veps>0}$ be a family of domains defined by \eqref{domain} with the family  $\{\Phi_\veps\}_{\veps>0}$  given by \eqref{func-Phi_veps}--\eqref{func-phi-prop} and the function
    $\Phi$ satisfies \eqref{condition-on-Phi}.
     Let $(\rho_\veps, \uvect_\veps)_{\veps>0}$ be a family of weak solutions to the Navier-Stokes system \eqref{continuity-eq}--\eqref{momentum-eq} in $(0,T)\times \Omega_\veps$ as specified in Definition \ref{Defi-sol-main} with boundary conditions \eqref{no-slip}--\eqref{full-slip} 
and initial data $(\rho_0,  \mathbf m_0)$ with $\rho_0 \in L^{\gamma}(\Omega) $ extended by zero to the whole $\R^3$ and $\frac{|\mathbf m_0|^2}{\rho_0} \in L^1(\Omega)$. 
  
  Then, passing  to the limit as $\veps \to 0$,  we have (up to a subsequence)
  \begin{align}
  &	\rho_\veps \to \rho \, \text{  in } \,  \C_{\text{weak}}([0,T]; L^\gamma(\Omega ) ) ,\\
  &  \uvect_\veps \to \uvect \, \text{ weakly in } \, L^2(0,T; W^{1,2}(\Omega ; \mathbb R^3 ) ) , \\
  &  {\rho_\veps}\uvect_\veps \to {\rho}\uvect \, \text{ in } \, \C_{\textnormal{weak}}([0,T]; L^{\frac{2\gamma}{\gamma+1}}(\Omega; \mathbb R^3)) ,
  	\end{align}
  where  $(\rho, \uvect)$ is a weak solution of the Navier–Stokes system \eqref{continuity-eq}--\eqref{momentum-eq}  in $(0,T)\times \Omega$ with $\Omega = \T^2\times (0,1)$, in the
  sense of Definition \ref{Def-limit-sol}, supplemented with the no-slip boundary condition \eqref{no-slip-2} and initial
  data  $(\rho_0, \mathbf m_0)$.
\end{theorem}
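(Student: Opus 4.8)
The plan is to follow the standard scheme for passing to the limit in the compressible Navier--Stokes system, adapted to the shrinking rough layer. First I would derive uniform-in-$\veps$ a priori bounds from the energy inequality \eqref{energy-ineq}: since the initial data are fixed and supported in $\Omega \subset \Omega_\veps$, the right-hand side of \eqref{energy-ineq} is a constant independent of $\veps$, which yields $\rho_\veps$ bounded in $L^\infty(0,T;L^\gamma(\Omega_\veps))$, $\sqrt{\rho_\veps}\uvect_\veps$ bounded in $L^\infty(0,T;L^2)$, and — via Korn's inequality together with the no-slip condition on $\B$ — $\uvect_\veps$ bounded in $L^2(0,T;W^{1,2}(\Omega_\veps;\R^3))$. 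Here the Korn inequality must be quantified to be $\veps$-uniform: because $\uvect_\veps$ vanishes on the flat bottom $\B$ (independent of $\veps$), one gets $\|\uvect_\veps\|_{W^{1,2}(\Omega_\veps)}\le C\|\nabla_x\uvect_\veps + \nabla_x^\top\uvect_\veps\|_{L^2}$ with $C$ independent of $\veps$. To work on the fixed domain $\Omega$, I would restrict everything to $\Omega$ (or use an $\veps$-uniform extension operator from $\Omega_\veps$ to $\R^3$, which exists because the domains converge to $\Omega$ by \eqref{limit-Phi-eps}); extract weakly/weak-$*$ convergent subsequences $\rho_\veps\rightharpoonup\rho$, $\uvect_\veps\rightharpoonup\uvect$; and upgrade to the stated $\C_{\mathrm{weak}}$ convergences using the continuity and momentum equations for time-equicontinuity (Aubin--Lions--Simon type argument).

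\textbf{The key intermediate step --- the boundary condition.} The crucial point distinguishing this problem is showing that the limit $\uvect$ satisfies the full no-slip condition \eqref{no-slip-2} on all of $\partial\Omega$, even though on $\Gamma_\veps$ we only imposed the slip condition \eqref{full-slip}. On $\B$ this is immediate from the trace being preserved under weak convergence. On the top $\{x_3=1\}$ I would invoke Lemma~\ref{Lemma-boundary}: applying \eqref{boundary-v} to $\vbold = \uvect_\veps(t,\cdot)$ (whose trace on $\Gamma_\veps$ has zero normal component by \eqref{full-slip}), integrating in time, and using the $\veps$-uniform gradient bound, gives
\begin{align*}
\int_0^T\!\!\int_{\{x_3=1\}} |\uvect_\veps|^2\,\d\sigma\,\dt \;\le\; \veps\, c_1 \int_0^T \|\nabla_x\uvect_\veps\|_{L^2(\Omega_\veps)}^2\,\dt \;\le\; \veps\, C \;\longrightarrow\; 0.
\end{align*}
Hence the traces of $\uvect_\veps$ on $\{x_3=1\}$ converge to $0$ strongly in $L^2((0,T)\times\{x_3=1\})$; combined with the weak convergence $\uvect_\veps\rightharpoonup\uvect$ in $L^2(0,T;W^{1,2}(\Omega))$ and continuity of the trace operator, this forces $\uvect|_{\{x_3=1\}}=0$. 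Together with the bottom this gives \eqref{no-slip-2}, and in particular the limit test functions in \eqref{weak_form_momen-2} (vanishing on all of $\partial\Omega$) are admissible.

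\textbf{Passing to the limit in the equations.} With the convergences in hand I would pass to the limit in the weak continuity equation \eqref{weak-for-con}, the renormalized form \eqref{weak-renorm}, and the momentum equation \eqref{weak_form_momen}. The linear terms and the viscous term pass by weak convergence; the term $\rho_\veps\uvect_\veps$ converges strongly enough (in $\C_{\mathrm{weak}}([0,T];L^{2\gamma/(\gamma+1)})$, compactly into $W^{-1,2}$) to identify $\partial_t\rho + \Div_x(\rho\uvect)=0$ and to handle $\rho_\veps[\uvect_\veps\otimes\uvect_\veps]\rightharpoonup\rho[\uvect\otimes\uvect]$ using the compact embedding of $W^{1,2}$ into $L^2$ for the velocity. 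The genuinely hard part is the pressure: one must show $\overline{p(\rho)} = p(\rho)$, i.e. that $\rho_\veps\to\rho$ strongly in $L^1((0,T)\times\Omega)$ (at least a.e.). This requires (i) an $\veps$-uniform improved pressure estimate $\rho_\veps\in L^{\gamma+\theta}$ for some $\theta>0$, obtained by testing the momentum equation with a Bogovskii-type corrector of $\rho_\veps^\theta - \langle\rho_\veps^\theta\rangle$ — and here the $\veps$-dependence of the Bogovskii operator on $\Omega_\veps$ must be controlled uniformly, which is possible because $\Omega_\veps$ are uniformly Lipschitz (equi-Lipschitz $\Phi_\veps$); and (ii) the Lions--Feireisl effective-viscous-flux identity plus the renormalization/oscillation-defect-measure machinery to propagate strong convergence of the density. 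I expect step (ii), together with the uniform Bogovskii estimate on the family of rough domains, to be the main technical obstacle; modulo that, the argument is a routine adaptation of \cite{Feireisl2001existence,Feireisl-Novotny-book}. Finally, lower semicontinuity of the convex dissipation functional and of the convex potential $P$ under weak convergence yields the limit energy inequality \eqref{energy-ineq-2}, completing the identification of $(\rho,\uvect)$ as a weak solution in the sense of Definition~\ref{Def-limit-sol}.
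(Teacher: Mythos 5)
Your plan follows essentially the same route as the paper: energy estimate plus a uniform Korn inequality for the a priori bounds, restriction to $\Omega\subset\Omega_\veps$, Bogovskii-based improved pressure integrability with $\veps$-uniform constant from the equi-Lipschitz boundary, Arzel\`a--Ascoli for $\C_{\mathrm{weak}}$ convergences, effective-viscous-flux identity and the oscillation defect measure for pointwise density convergence, Lemma~\ref{Lemma-boundary} for the no-slip condition on $\{x_3=1\}$, and lower semicontinuity for the energy inequality. The only noteworthy technical variation is your Korn inequality: you propose to use the trace-based version driven by the no-slip on the flat bottom $\B$, whereas the paper invokes the density-weighted generalized Korn inequality of Bucur--Feireisl (Lemma~\ref{prop-extension-op}), in which the compensating lower-order term is $\int_{\Omega_\veps}\rho_\veps|\uvect_\veps|$ rather than a trace, and the $\veps$-uniformity of the constant is built into the statement. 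Your version should also work for this geometry, but note two points that the paper's choice makes automatic and yours does not: (i) the dissipation controls only the \emph{traceless} symmetric gradient $\nabla\uvect_\veps+\nabla^\top\uvect_\veps-\tfrac{2}{3}\Div_x\uvect_\veps\,\mathbb{I}$ (with $\eta\ge 0$ possibly zero), so you need the Korn estimate in the traceless form, and (ii) the uniformity in $\veps$ of the trace-based Korn constant still rests on the equi-Lipschitz property of $\{\Phi_\veps\}$, which you should state explicitly, just as you do for Bogovskii. Apart from that, the argument matches the paper's.
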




 \section{Proof of the main result}

This section is devoted to prove Theorem \ref{Main-Theorem} which is the main result of our paper.

\subsection{Uniform bounds}\label{section-uniform-bound}

 (i)   We  have the following bounds from \eqref{energy-ineq},  
 \begin{align}
 \label{uni-bound-rho}	&\esssup_{\tau \in (0,T)}\| \rho_\veps (\tau, \cdot)\|_{L^\gamma(\Omega_\veps)} \leq C ,\\
 \label{uni-bound-rho-u}	&\esssup_{\tau \in (0,T)}\|\sqrt{\rho_\veps} \uvect_\veps(\tau, \cdot)\|_{L^2(\Omega_\veps ; \mathbb R^3)} \leq C , 
 \end{align}
with some constant $C>0$ that does not depend on $\veps>0$.

\vspace*{.2cm} 

(ii) Let us write the following result  
 due to \cite[Proposition 4.1]{Bucur-Feireisl} by Bucur and Feireisl.
  
 \begin{lemma}[A generalized Korn's inequality]\label{prop-extension-op}	
Let $\Omega_\veps$ be given by \eqref{domain} and  assume that $r$ is a non-negative scalar function on $\Omega_\veps$ such that 
\begin{align}\label{function-r} 
0< m \leq \int_{\Omega_\veps} r \d x , \ \  \ \int_{\Omega_\veps} r^{\nu} \d x \leq M   
\end{align} 
for a certain $\nu > \max\{1, \frac{3q}{4q-3}\}$, $q\in (1, \infty)$.

Then 
\begin{align}\label{Korn-type-modified}
\left\|\vbold \right\|_{W^{1,q}(\Omega_\veps; \R^3)} \leq C(m,M, q) \left(   \left\| \nabla_x \vbold + \nabla_x^\top \vbold - \frac{2}{3} \Div_x \vbold \mathbb I \right\|_{L^q(\Omega_\veps; \R^{3\times 3})} + \int_{\Omega_\veps} r |\vbold| \d x  \right) 
\end{align} 
for any $\vbold \in W^{1,q}(\Omega_\veps; \R^3)$. In particular, the constant $C$ is independent of $\veps $. 
\end{lemma}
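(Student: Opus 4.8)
The plan is to split an arbitrary $\vbold$ into its component in the kernel of the trace‑free symmetric gradient plus a remainder, control the remainder by Korn's inequality, and control the kernel component through the weighted term $\int_{\Omega_\veps}r|\vbold|\,\dx$ — keeping all constants $\veps$‑independent by exploiting that $\{\Omega_\veps\}$ is an equi‑Lipschitz family of graph domains trapped between the two fixed sets $\Omega=\T^2\times(0,1)\subset\Omega_\veps\subset\widehat\Omega:=\T^2\times(0,2)$ (the latter inclusion for $\veps<1/L$, using $\Phi_\veps>0$). Write $e(\vbold):=\nabla_x\vbold+\nabla_x^{\top}\vbold-\tfrac23\Div_x\vbold\,\mathbb{I}$ for the trace‑free symmetric gradient and let $\mathcal N$ be its kernel on a connected domain; in dimension three $\mathcal N$ is the fixed $10$‑dimensional space of conformal Killing fields, which are polynomial vector fields of degree at most two, so any $\zeta\in\mathcal N$ that vanishes on a set of positive Lebesgue measure vanishes identically. \emph{Step 1 (uniform Korn, quotient form).} Since the Lipschitz character of $\Omega_\veps$ (dimension, Lipschitz constant $\le L$, diameter) is bounded independently of $\veps$, I would invoke — or re‑derive by the same compactness scheme as in Step~3 below, using the $\veps$‑independent extension operator $W^{1,q}(\Omega_\veps)\to W^{1,q}(\widehat\Omega)$ available for equi‑Lipschitz graph domains — the classical Korn inequality for the trace‑free strain, valid in dimension three for all $q\in(1,\infty)$, in the quotient form: there is $C_1=C_1(q)$ independent of $\veps$ and, for each $\vbold\in W^{1,q}(\Omega_\veps;\R^3)$, an element $\zeta=\zeta[\vbold]\in\mathcal N$ with $w:=\vbold-\zeta$ satisfying $\|w\|_{W^{1,q}(\Omega_\veps)}\le C_1\|e(\vbold)\|_{L^q(\Omega_\veps)}$. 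The same extension operator also furnishes an $\veps$‑independent Sobolev embedding $W^{1,q}(\Omega_\veps)\hookrightarrow L^{\nu'}(\Omega_\veps)$: this is exactly where the hypothesis is used, as $\nu>\max\{1,\tfrac{3q}{4q-3}\}$ is equivalent to $\nu'<\tfrac{3q}{3-q}$ when $q<3$ and to $\nu'<\infty$ when $q\ge3$.

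\emph{Step 2 (the weight controls $\zeta$).} From $\zeta=\vbold-w$, Hölder's inequality with exponents $(\nu,\nu')$, the bound $\|r\|_{L^\nu(\Omega_\veps)}\le M^{1/\nu}$, the Sobolev embedding of Step~1, and then Step~1 once more, one obtains $\int_{\Omega_\veps}r|\zeta|\,\dx\le\int_{\Omega_\veps}r|\vbold|\,\dx+C_2M^{1/\nu}\|e(\vbold)\|_{L^q(\Omega_\veps)}$ with $C_2=C_2(q)$ independent of $\veps$. \emph{Step 3 (the weighted functional is a uniform norm on $\mathcal N$).} It then remains to prove that there is $c_0=c_0(m,M,q)>0$, independent of $\veps$ and of $r$, such that $\|\zeta\|_{W^{1,q}(\Omega_\veps)}\le c_0\int_{\Omega_\veps}r|\zeta|\,\dx$ for every $\zeta\in\mathcal N$.

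To prove this last claim, suppose it fails: there are $\veps_k\in(0,\veps_0)$, weights $r_k\ge0$ on $\Omega_{\veps_k}$ with $\int r_k\ge m$, $\int r_k^\nu\le M$, and $\zeta_k\in\mathcal N$ with $\|\zeta_k\|_{W^{1,q}(\Omega_{\veps_k})}=1$ but $\int_{\Omega_{\veps_k}}r_k|\zeta_k|\,\dx\to0$. Since $\mathcal N$ is a fixed finite‑dimensional space of smooth functions and $\Omega\subset\Omega_{\veps_k}\subset\widehat\Omega$, all norms $\|\cdot\|_{W^{1,q}(\Omega_{\veps_k})}$ restricted to $\mathcal N$ are comparable to one fixed norm; hence $(\zeta_k)$ is bounded in $C^1(\overline{\widehat\Omega})$, a subsequence converges in $\mathcal N$ to some $\zeta_\ast$, and $\|\zeta_\ast\|_{W^{1,q}(\Omega_{\veps_k})}\to1$. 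Extending each $r_k$ by zero to $\widehat\Omega$ keeps $\|r_k\|_{L^\nu(\widehat\Omega)}\le M^{1/\nu}$, so along a further subsequence $r_k\rightharpoonup r_\ast$ weakly in $L^\nu(\widehat\Omega)$, with $r_\ast\ge0$, $\int_{\widehat\Omega}r_\ast\ge m$, $\|r_\ast\|_{L^\nu}\le M^{1/\nu}$, and — since $\operatorname{supp}r_k\subset\T^2\times(0,1+\veps_kL)$ — $\operatorname{supp}r_\ast\subset\overline\Omega$. Using that $|\zeta_k|\to|\zeta_\ast|$ uniformly on $\overline{\widehat\Omega}$ and that $r_k\rightharpoonup r_\ast$ tests against $|\zeta_\ast|\in L^{\nu'}$, one gets $\int_{\Omega_{\veps_k}}r_k|\zeta_k|=\int_{\widehat\Omega}r_k|\zeta_k|\to\int_{\widehat\Omega}r_\ast|\zeta_\ast|$, hence $\int r_\ast|\zeta_\ast|=0$. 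By Hölder, $|\{r_\ast>0\}|\ge(mM^{-1/\nu})^{\nu'}>0$, so $\zeta_\ast$ vanishes on a set of positive measure and therefore $\zeta_\ast\equiv0$, contradicting $\|\zeta_\ast\|_{W^{1,q}(\Omega_{\veps_k})}\to1$. This establishes Step~3.

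Combining the three steps, $\|\vbold\|_{W^{1,q}(\Omega_\veps)}\le\|w\|_{W^{1,q}(\Omega_\veps)}+\|\zeta\|_{W^{1,q}(\Omega_\veps)}\le C_1\|e(\vbold)\|_{L^q}+c_0\int_{\Omega_\veps}r|\zeta|\,\dx\le C(m,M,q)\big(\|e(\vbold)\|_{L^q(\Omega_\veps)}+\int_{\Omega_\veps}r|\vbold|\,\dx\big)$ with $C$ independent of $\veps$, which is \eqref{Korn-type-modified}. I expect the main obstacle to be Step~1: securing Korn's inequality for the \emph{trace‑free} strain with a constant governed only by the equi‑Lipschitz character of $\{\Omega_\veps\}$, uniformly as the boundary oscillations of $\partial\Omega_\veps$ become arbitrarily fast — this is the geometric heart and relies on a uniform extension operator (or a quantitative, Lipschitz‑character‑dependent version of Korn's inequality), together with the three‑dimensionality that makes the trace‑free kernel finite‑dimensional. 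By contrast, the somewhat technical exponent condition on $\nu$ plays only the mild role of placing $\nu'$ strictly below the Sobolev exponent so that the lower‑order terms carrying the weight $r$ can be absorbed.
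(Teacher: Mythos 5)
The paper does not actually prove this lemma: it is quoted verbatim from \cite[Proposition 4.1]{Bucur-Feireisl}, so there is no internal argument to compare against. Your reconstruction follows the route that is standard for such weighted Korn-type inequalities and is very likely the one behind the cited result: decompose $\vbold$ into a piece in the finite-dimensional conformal-Killing kernel $\mathcal N$ of the trace-free strain plus a remainder, control the remainder by a quotient-form Korn inequality, and show by a compactness/contradiction argument that the weighted functional $\zeta\mapsto\int r|\zeta|$ is a norm on $\mathcal N$ with constants depending only on $m,M,q$. Steps~2 and~3 are carried out correctly — the identification of the exponent hypothesis with the embedding $W^{1,q}\hookrightarrow L^{\nu'}$, the lower bound $|\{r_*>0\}|\ge (mM^{-1/\nu})^{\nu'}$, the localization of $\operatorname{supp} r_*$, and the observation that a nonzero polynomial (conformal Killing) vector field cannot vanish on a set of positive measure are all sound, as is the final assembly of the three estimates.

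The only real soft spot is the fallback you offer for Step~1, which does not work as stated. A Sobolev extension $E_\veps: W^{1,q}(\Omega_\veps)\to W^{1,q}(\widehat\Omega)$ controls $\|\nabla(E_\veps\vbold)\|_{L^q(\widehat\Omega)}$ by $\|\vbold\|_{W^{1,q}(\Omega_\veps)}$, but gives \emph{no} control of $\|e(E_\veps\vbold)\|_{L^q(\widehat\Omega)}$ in terms of $\|e(\vbold)\|_{L^q(\Omega_\veps)}$; hence one cannot transfer a Korn inequality from $\widehat\Omega$ to $\Omega_\veps$ through it. Similarly, the compactness scheme of Step~3 does not bootstrap here: from $\|e(\vbold_k)\|_{L^q}\to 0$ and weak $W^{1,q}$ / strong $L^q$ convergence of the extended sequence one cannot conclude strong $W^{1,q}$ convergence of $\vbold_k$ without already having a Korn second inequality $\|\nabla\vbold\|_{L^q(\Omega_\veps)}\le C\big(\|e(\vbold)\|_{L^q(\Omega_\veps)}+\|\vbold\|_{L^q(\Omega_\veps)}\big)$ with $C$ uniform in $\veps$ — which is precisely the geometric content one is trying to establish. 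The correct ingredient is the one you name in parentheses: a quantitative trace-free Korn (second) inequality whose constant depends only on $n=3$, $q$, and the Lipschitz character of the boundary, hence is uniform over the equi-Lipschitz family $\{\Omega_\veps\}$ (such statements follow, e.g., from Korn on star-shaped domains with eccentricity controlled by $L$ together with a chain/partition argument). If you cite that rather than the extension operator, your Steps~1--3 give a complete proof of the lemma; as it stands, Step~1 is an accurate signpost to the hard part but not a derivation of it.
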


Using the  result in Lemma \ref{prop-extension-op}, our goal is show that the sequence $\{\uvect_\veps\}_{\veps>0}$ is uniformly bounded w.r.t. $\veps>0$.
Indeed, since the initial conditions are independent of $\veps$ (according to Definition \ref{Defi-sol-main}), the 
right-hand side of \eqref{energy-ineq} is uniformly bounded w.r.t. $\veps>0$. As a consequence,  we have, from \eqref{energy-ineq}, that
\begin{align}\label{bound-nabla-u}
\int_0^\tau \int_{\Omega_\veps}   \left(\frac{\mu}{2} \Big| \nabla_x \uvect_\veps  + \nabla^\top_x \uvect_\veps -\frac{2}{3} \Div_x \uvect_\veps \mathbb{I} \Big|^2\right) \leq C ,
\end{align}
for a.a. $\tau \in [0,T]$, where the constant $C>0$ is independent of $\veps$. 
Then, using the generalized Korn's  inequality \eqref{Korn-type-modified}, we have (taking $r=\rho_\veps$ in \eqref{function-r})  
\begin{align}\label{uni-bound-u}
	& \|\uvect_\veps \|_{L^2(0,T; W^{1,2}(\Omega_\veps; \mathbb R^3))}  \notag \\
 &\leq 
 C \left(   \left\| \nabla_x \uvect_\veps + \nabla_x^\top \uvect_\veps - \frac{2}{3} \Div_x \mathbb I \right\|_{L^2(\Omega_\veps; \R^{3\times 3})} + \int_{\Omega_\veps} \rho_\veps |\uvect_\veps| \d x  \right) 
 \notag \\
 & 
\leq C \left(   \left\| \nabla_x \uvect_\veps + \nabla_x^\top \uvect_\veps - \frac{2}{3} \Div_x \mathbb I \right\|_{L^2(\Omega_\veps; \R^{3\times 3})} + \left\| \sqrt{\rho_\veps} \uvect_\veps  \right\|^2_{L^2(\Omega_\veps; \R^3)} + \|\rho_\veps\|_{L^1(\Omega_\veps)}  \right) \notag \\
&\leq C ,
\end{align} 
due to \eqref{uni-bound-rho}, \eqref{uni-bound-rho-u} and \eqref{bound-nabla-u}.

\vspace*{.2cm} 

(iii) Now, we compute 
\begin{align*}
\left[\int_{\Omega_\veps} |\rho_\veps \uvect_\veps|^{\frac{2\gamma}{\gamma+1}}\right]^{\frac{\gamma+1}{2\gamma}} \leq 
\left[\left( \int_{\Omega_\veps}\left|\sqrt{\rho_\veps} \uvect_\veps \right|^2  \right)^{\frac{\gamma}{\gamma+1}} \left(\int_{\Omega_\veps} \rho^\gamma_\veps \right)^{\frac{1}{\gamma+1}} \right]^{\frac{\gamma+1}{2\gamma}} ,
\end{align*} 
and thus by virtue of \eqref{uni-bound-rho} and \eqref{uni-bound-rho-u}, we obtain
\begin{align}\label{bound-rho-u-2}
\|\rho_\veps \uvect_\veps \|_{L^\infty(0,T; L^{\frac{2\gamma}{\gamma+1}}(\Omega_\veps; \R^3))  } \leq C . 
\end{align}

\vspace*{.2cm}

(iv)  We further observe that (on condition $\gamma>\frac 32$)
\begin{align*}
\bigg| \int_{\Omega_\veps} \rho_\veps [\uvect_\veps \otimes \uvect_\veps] \cdot  \boldphi               \bigg| \leq \|\rho_\veps \uvect_\veps\|_{L^{\frac{2\gamma}{\gamma+1}}(\Omega_\veps; \R^3) } \|\uvect_\veps\|_{L^6(\Omega_\veps;\R^3)} \|\boldphi\|_{L^{\frac{6\gamma}{2\gamma-3}}(\Omega_\veps; \R^3)},
\end{align*} 
for any $\boldphi \in L^2(0,T; L^{\frac{6\gamma}{2\gamma-3}}(\Omega_\veps; \R^3))$,
and consequently, 
\begin{align}\label{bound-convective-term} 
\|\rho_\veps [\uvect_\veps \otimes \uvect_\veps] \|_{L^2(0,T;L^{\frac{6\gamma}{4\gamma+3}}(\Omega_\veps; \R^3))     } \leq C ,
\end{align} 
thanks to \eqref{uni-bound-u} and \eqref{bound-rho-u-2}.


\vspace*{.2cm}

(v) {\bf Pressure estimate.}  To obtain the  estimate for the pressure term, we use the technique based on the {\em Bogovskii operator}. To begin with, we consider the auxiliary problem: given 
\begin{align}\label{B-1} 
	g \in \mathcal \C^\infty_c(\Omega_\veps), \ \ \ \int_{\Omega_\veps} g \dx = 0 ,
\end{align}
find a vector field $\mathbf v= \B_\veps [g]$ such that 
\begin{align}\label{B-2}
	\mathbf v \in \mathcal \C^\infty_c(\Omega_\veps; \mathbb R^3) , \ \ \  \Div_x \mathbf v = g \ \ \text{in } \Omega_\veps .
\end{align}
The problem \eqref{B-1}--\eqref{B-2} admits many solutions, but we use the construction due to Bogovskii \cite{Bogovskii}. 
In particular, we write the following result.
\begin{lemma}\label{Lemma-Bogovskii}
	For each $\veps>0$, there is a linear solution operator $\B_\veps$ associated to our problem \eqref{B-1}--\eqref{B-2} such that
\begin{align}\label{Bogovskii-esti} 
\|\B_\veps [g] \|_{W^{m+1, q}(\Omega_\veps; \mathbb R^3)} \leq c(m, q) \|g\|_{W^{m,q}(\Omega_\veps)} ,
\end{align}
and moreover, the constant $c(m,q)>0$   is independent of $\veps$. In particular, the norm of $\B_\veps$ is independent of $\veps$. 

If, moreover, $g$ can be written in the form $g=\Div_x \mathbf G$ for a certain $\mathbf G\in L^r(\Omega_\veps; \mathbb R^3)$, $\mathbf G \cdot \mathbf n |_{\partial \Omega_\veps}= 0$, then 
\begin{align}\label{Bogovskii-esti-2}
\|\B_\veps [g] \|_{L^{r}(\Omega_\veps; \mathbb R^3)} \leq c(r) \|\mathbf G\|_{L^{r}(\Omega_\veps;\mathbb R^3)} 
\end{align}
for any $1<r<\infty$, where the constant $c(r)>0$ is independent of $\veps$. 
\end{lemma}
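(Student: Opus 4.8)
The plan is to construct the operator $\B_\veps$ by pulling back the fixed Bogovskii operator on a reference domain through a suitable change of variables, and then to track how the $\veps$-dependence of the diffeomorphism interacts with the Sobolev norms. More precisely, I would exploit the structure of $\Omega_\veps$: it is the graph domain $\{0<x_3<1+\Phi_\veps(x_1,x_2)\}$ over $\T^2$, and the vertical stretching map $\Psi_\veps : \Omega \to \Omega_\veps$, $\Psi_\veps(y_1,y_2,y_3) = (y_1,y_2,(1+\Phi_\veps(y_1,y_2))y_3)$, is a bi-Lipschitz $\C^{2+\nu}$-diffeomorphism from the fixed slab $\Omega = \T^2\times(0,1)$ onto $\Omega_\veps$, with Jacobian $J_\veps = 1+\Phi_\veps$. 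The key point for uniformity is that, because of the scaling $\Phi_\veps(x_1,x_2)=\veps\Phi(x_1/\veps,x_2/\veps)$ together with \eqref{func-phi-prop} and \eqref{limit-Phi-eps}, all the first-order data of $\Psi_\veps$ are uniformly bounded: $\|\Phi_\veps\|_{L^\infty} \le \veps\|\Phi\|_{L^\infty}\to 0$ and $\|\nabla_{\overline x}\Phi_\veps\|_{L^\infty} = \|\nabla\Phi\|_{L^\infty} = L$, so $\Psi_\veps$ and $\Psi_\veps^{-1}$ have derivatives bounded independently of $\veps$, and $1/2 \le J_\veps \le 3/2$ for $\veps$ small. (For the higher-order estimate \eqref{Bogovskii-esti} with $m\ge 1$ one pays a price: $\|D^k\Phi_\veps\|_{L^\infty}\sim \veps^{1-k}$ blows up; so for $m\ge 1$ I would instead invoke the classical result that $\Omega_\veps$ is a John domain — or more simply a Lipschitz domain star-shaped with respect to a ball — with John/star-shapedness constants that are uniform in $\veps$ precisely because of the equi-Lipschitz and equi-bounded property, and apply the standard Bogovskii estimates on John domains, e.g. from Galdi's or Acosta–Durán's treatment, whose constants depend only on those geometric constants.)

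The steps, in order, would be: (1) Fix the reference domain $\Omega$ and let $\B_0$ denote a Bogovskii operator on $\Omega$ satisfying \eqref{Bogovskii-esti} and \eqref{Bogovskii-esti-2} with some fixed constants $c(m,q)$, $c(r)$ — this exists since $\Omega$ is a bounded Lipschitz domain (indeed star-shaped with respect to a ball in the $y_1y_2$-periodic setting, applied chart-wise). (2) Given $g\in\C^\infty_c(\Omega_\veps)$ with $\int_{\Omega_\veps}g=0$, transport it to $\tilde g := (g\circ\Psi_\veps)\,J_\veps$ on $\Omega$; then $\int_\Omega \tilde g = \int_{\Omega_\veps} g = 0$, and by the Piola-type identity the field $\tilde{\mathbf v} := \B_0[\tilde g]$ satisfies $\Div_y\tilde{\mathbf v} = \tilde g$ on $\Omega$, with $\tilde{\mathbf v}\in\C^\infty_c(\Omega;\R^3)$. (3) Push forward by the Piola transform: set $\B_\veps[g] := \mathbf v$ where $\mathbf v\circ\Psi_\veps = J_\veps^{-1}\, (D\Psi_\veps)\,\tilde{\mathbf v}$; the Piola transform intertwines divergences, $(\Div_x\mathbf v)\circ\Psi_\veps = J_\veps^{-1}\Div_y\tilde{\mathbf v} = J_\veps^{-1}\tilde g = g\circ\Psi_\veps$, so $\Div_x\mathbf v = g$ in $\Omega_\veps$ and $\mathbf v\in\C^\infty_c(\Omega_\veps;\R^3)$. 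Linearity in $g$ is manifest. (4) Norm bounds: changing variables in $\int_{\Omega_\veps}|\mathbf v|^q$ and $\int_{\Omega_\veps}|\nabla_x\mathbf v|^q$ picks up only the uniformly bounded factors $J_\veps$, $J_\veps^{-1}$, $D\Psi_\veps$, $D\Psi_\veps^{-1}$ (for $m=0$), so $\|\B_\veps[g]\|_{W^{1,q}(\Omega_\veps)} \le C\,\|\tilde{\mathbf v}\|_{W^{1,q}(\Omega)} \le C\,c(0,q)\,\|\tilde g\|_{L^q(\Omega)} \le C'\,\|g\|_{L^q(\Omega_\veps)}$ with $C'$ independent of $\veps$; this gives \eqref{Bogovskii-esti} for $m=0$, and the $m\ge 1$ case via the uniform-John-domain route. (5) The improved estimate \eqref{Bogovskii-esti-2}: if $g=\Div_x\mathbf G$ with $\mathbf G\cdot\mathbf n|_{\partial\Omega_\veps}=0$, apply the Piola transform to $\mathbf G$ as well, obtaining $\tilde{\mathbf G} := J_\veps\,(D\Psi_\veps)^{-1}(\mathbf G\circ\Psi_\veps)$ with $\tilde g = \Div_y\tilde{\mathbf G}$ and $\tilde{\mathbf G}\cdot\mathbf n|_{\partial\Omega}=0$ (the Piola transform preserves the vanishing normal trace, since it maps conormals to conormals); then $\|\B_\veps[g]\|_{L^r(\Omega_\veps)} \le C\,\|\tilde{\mathbf v}\|_{L^r(\Omega)} \le C\,c(r)\,\|\tilde{\mathbf G}\|_{L^r(\Omega)} \le C'\,\|\mathbf G\|_{L^r(\Omega_\veps)}$, uniformly in $\veps$.

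The main obstacle, and the point that needs the most care, is keeping the constants genuinely uniform in $\veps$. For the $L^r$ and $W^{1,q}$ statements this is clean because only first derivatives of $\Psi_\veps$ enter and those are $\veps$-uniformly controlled by the equi-Lipschitz hypothesis \eqref{func-phi-prop} together with $\Phi_\veps\to 0$ in $\C^0$. The delicate part is \eqref{Bogovskii-esti} for general $m$: the naive change-of-variables argument fails because $D^k\Psi_\veps$ is unbounded as $\veps\to0$ for $k\ge 2$. I would handle this by not using the diffeomorphism at all for $m\ge1$, but instead citing the sharp form of the Bogovskii estimate on bounded John domains (the constant depending only on the diameter, the John constant, and $m,q$), and verifying that $\{\Omega_\veps\}$ have a common John constant — this follows because each $\Omega_\veps$ contains a fixed interior slab $\T^2\times(\delta,1-\delta)$ and every point can be joined to it by a John curve whose "carrot" width is controlled by the Lipschitz constant $L$ and the uniform lower/upper bounds on $1+\Phi_\veps$; I would also note for completeness that one can localize using a fixed finite cover adapted to the $\veps$-independent geometry. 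An alternative, if one prefers to stay self-contained, is the well-known decomposition of the Bogovskii construction into a finite number of star-shaped-with-respect-to-a-ball pieces, using a partition of unity independent of $\veps$ and invoking the explicit Bogovskii formula on each piece, where the constant depends only on the ratio of the outer diameter to the inner ball radius — again uniform by the equi-bounded, equi-Lipschitz property. Either way, the role of the hypotheses \eqref{func-phi-prop}–\eqref{limit-Phi-eps} is exactly to furnish the $\veps$-uniform geometric constants, and the non-degeneracy condition \eqref{condition-on-Phi} plays no role here — it enters only later, through Lemma \ref{Lemma-boundary}.
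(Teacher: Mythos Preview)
Your approach is correct. The paper itself does not give a detailed proof of this lemma: it simply refers to Galdi, Chapter III.3, for the construction of the Bogovskii operator and to \cite{bucur2008influence}, Section 5.1, for the uniformity of the constants, the sole observation being that $\Gamma_\veps$ is the graph of an equi-Lipschitz family $\{\Phi_\veps\}_{\veps>0}$.

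Your route is in part genuinely different and more explicit. For $m=0$ you construct $\B_\veps$ by transporting a single fixed Bogovskii operator on the reference slab $\Omega$ through the vertical stretching map $\Psi_\veps$ and its Piola transform; this is not the argument in the cited references, which proceed instead via the classical star-shaped decomposition (the explicit Bogovskii integral formula on star-shaped pieces, patched together). You correctly identify that the Piola route breaks down for $m\ge 1$ because $\|D^k\Phi_\veps\|_{L^\infty}\sim\veps^{1-k}$, and your fallback --- uniform John/star-shapedness constants coming from the equi-Lipschitz bound $L$ and the uniform two-sided bounds on $1+\Phi_\veps$ --- is precisely the content of the paper's citation. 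The Piola argument buys you a clean, one-line verification of \eqref{Bogovskii-esti-2} (since the Piola transform intertwines divergences and preserves vanishing normal traces), whereas the star-shaped route handles all $m$ at once but is less self-contained. Your observation that the non-degeneracy condition \eqref{condition-on-Phi} is irrelevant here is also correct.
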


 For the construction of the Bogosvskii operator we refer to \cite[Chapter III.3]{Galdi}. The norm of $\B_\veps$ is independendent of $\veps$, since the rough boundary $\Gamma_\veps$ is given as the graph of equi-Lipschitz familiy $\{\Phi_\veps\}_{\veps>0}$. We refer to \cite[Section 5.1]{bucur2008influence} for more details.


\vspace*{.2cm} 

{\bf Claim:} Now,  our goal is to find the following  estimate for the pressure term:  
\begin{align}\label{pressure est}
\int_0^T\int_{\Omega_\veps}	p(\rho_\veps)  \rho^{\theta}_\veps  \leq C 
\end{align} 
for some $\theta>0$,  where the constant  $C>0$ is independent of $\veps$.



\vspace*{.2cm} 


To prove \eqref{pressure est}, let us  consider the following function 
\begin{align}
\boldvphi = \psi(t) \B_\veps \left[ \rho^\theta_\veps - \frac{1}{|\Omega_\veps|} \int_{\Omega_\veps} \rho^\theta_\veps(t)  \right] , \ \psi \in \C^\infty_c(0,T), \ 0\leq \psi\leq 1 ,
\end{align}
and hereby, we fix  $\theta = \frac23 \gamma -1>0$ (observe that $\theta\leq \gamma$).  
In the sequel, we  denote $$c(\rho_\veps) = \frac{1}{|\Omega_\veps|} \int_{\Omega_\veps} \rho^\theta_\veps(t).$$
We emphasize that the function $ \boldvphi$ is not an admissible test-function in the sense of Definition \ref{Defi-sol-main}, due to the lack of time regularity. However, using a standard approximation scheme first introduced in \cite{Lions-Diperna}, it is possible to make the following argument rigorous. For more details we refer to \cite[Section 7.9.5]{novotny2004introduction}.

Let us formally use  $\boldvphi$ as a test function in \eqref{weak_form_momen}. In what follows, we get 
\begin{align}\label{modi-pres-frm-momen}
&\int_0^T \psi \int_{\Omega_\veps} a\rho^{\gamma+\theta}_\veps \notag  \\
=&  \int_0^T \psi \, c(\rho_\veps) \int_{\Omega_\veps}  a\rho^\gamma_\veps 
- \int_0^T \psi_t \int_{\Omega_\veps} \rho_\veps \uvect_\veps \,  \B_\veps \left[ \rho^\theta_\veps - c(\rho_\veps)  \right]  
 +   
\int_0^T \psi \int_{\Omega_\veps} \rho_\veps \uvect_\veps  \, \B_\veps \left[ \Div_x (\rho^\theta_\veps \uvect_\veps) \right]    \notag \\ 
&+\int_0^T \psi \int_{\Omega_\veps} \rho_\veps \uvect_\veps  \, \B_\veps \left[(\theta-1) \rho^\theta_\veps \Div_x \uvect_\veps - (\theta-1) \frac{1}{|\Omega_\veps|}\int_{\Omega_\veps}\rho^\theta_\veps \Div_x \uvect_\veps \right] \notag \\
& - \int_0^T \psi \int_{\Omega_\veps} \rho_\veps [\uvect_\veps \otimes \uvect_\veps ] : \nabla_x \B_\veps \Big[ \rho^\theta_\veps - c(\rho_\veps) \Big]  +
\int_0^T \psi \int_{\Omega_\veps}
\Ss(\nabla_x \uvect_\veps) : \nabla_x \B_\veps \left[ \rho^\theta_\veps - c(\rho_\veps) \right] \notag \\
&=  \sum_{j=1}^6 I_j .
\end{align} 

\vspace*{.1cm}

\begin{itemize} 
\item By means of the estimate  \eqref{uni-bound-rho} and the fact that $\Omega \subset \Omega_\veps$ for each $\veps>0$, we get 
\begin{align}\label{esti-I-1}
|I_1| &= \bigg|  \int_0^T \psi 
 \, c(\rho_\veps)\int_{\Omega_\veps}  a\rho^\gamma_\veps  \bigg|  \leq  \frac{|a|}{|\Omega_\veps|}\int_0^T \psi \left(\int_{\Omega_\veps} \rho^\theta_\veps\right)  \left(\int_{\Omega_\veps}\rho^\gamma_\veps \right) \notag \\
  & \quad \leq \frac{|a|}{|\Omega|} \|\rho^\theta_\veps\|_{L^1((0,T)\times \Omega_\veps)} \|\rho^\gamma_\veps\|_{L^1((0,T)\times \Omega_\veps)} \|\psi\|_{L^{\infty}(0,T)} \leq C \|\psi\|_{L^{\infty}(0,T)} ,
\end{align} 
where the constant $C>0$ uniform in $\veps>0$, since $\theta\leq \gamma$.

\vspace*{.1cm}

\item
Next, by H\"older inequality with $\frac12 + \frac{1}{2\gamma} + \frac{\gamma - 1}{2\gamma} = 1$ we get 
\begin{align}
|I_2|  & = \bigg|\int_0^T \psi_t \int_{\Omega_\veps} \rho_\veps \uvect_\veps \,  \B_\veps \left[ \rho^\theta_\veps - c(\rho_\veps)  \right] \bigg| \notag \\
&\leq \int_0^T |\psi_t| \|\sqrt{\rho_\veps} \|_{L^2(\Omega_\veps)} \| \sqrt{\rho_\veps} \uvect_\veps \|_{L^2(\Omega_\veps; \R^3)} \| \B\left[ \rho^\theta_\veps - c(\rho_\veps) \right] \|_{L^{\frac{2\gamma}{\gamma - 1}}(\Omega_\veps; \R^3)}  \notag \\
& \leq C \int_0^T |\psi_t| \| \B\left[ \rho^\theta_\veps - c(\rho_\veps) \right] \|_{W^{1,\frac{6\gamma}{5\gamma - 3}}(\Omega_\veps; \mathbb R^3)} \notag \\
& \leq C \int_0^T |\psi_t| \| \rho^\theta_\veps  \|_{L^{p}(\Omega_\veps)} \leq C\|\psi_t\|_{L^1(0,T)} , 
\end{align} 
 where we have used \eqref{Bogovskii-esti}, the embedding $W^{1,\frac{6\gamma}{5\gamma -3}}(\Omega_\veps)\hookrightarrow L^{\frac{2\gamma}{\gamma -1}}(\Omega_\veps)$, the  fact $\theta \frac{6\gamma}{5\gamma - 3} \le \gamma $, and the estimates \eqref{uni-bound-rho} and  \eqref{uni-bound-rho-u}.

\vspace*{.1cm}

\item By virtue of \eqref{Bogovskii-esti-2},  \eqref{uni-bound-rho} and \eqref{uni-bound-u}, we get 
\begin{align}
|I_3| &= \bigg| \int_0^T \psi \int_{\Omega_\veps} \rho_\veps \uvect_\veps \B_\veps \left[ \Div_x (\rho^\theta_\veps \uvect_\veps)  \right]     \bigg|
\notag \\
&\leq \int_0^T |\psi| \| \rho_\veps\|_{L^\gamma(\Omega_\veps)} \|\uvect_\veps\|_{L^6(\Omega_\veps;\R^3)}\|\B_\veps \left[\Div_x (\rho^\theta_\veps \uvect_\veps)  \right] \|_{L^{\frac{6\gamma}{5\gamma -6}}(\Omega_\veps; \R^3)}  \notag \\
& \leq \int_0^T |\psi|  \| \rho_\veps\|_{L^\gamma(\Omega_\veps)} \|\uvect_\veps\|_{L^6(\Omega_\veps;\R^3)}\|\rho^\theta_\veps \uvect_\veps  \|_{L^{\frac{6\gamma}{5\gamma -6}}(\Omega_\veps; \R^3)}  \notag \\
&\leq 
\|\psi\|_{L^\infty(0,T)} \|\rho_\veps\|_{L^\infty(0,T; L^\gamma(\Omega_\veps) )} \int_0^T\|\uvect_\veps \|_{L^6(\Omega_\veps; \R^3)} \|\rho^\theta_\veps\|_{L^{\frac{3\gamma}{2\gamma-3}}(\Omega_\veps)} \|\uvect_\veps\|_{L^6(\Omega_\veps; \mathbb R^3)} \notag \\  
&\leq \|\psi\|_{L^\infty(0,T)} \|\rho_\veps\|_{L^\infty(0,T; L^\gamma(\Omega_\veps) )} \int_0^T\|\uvect_\veps \|^2_{L^6(\Omega_\veps; \R^3)} \|\rho^\theta_\veps\|_{L^{\frac{3\gamma}{2\gamma-3}}(\Omega_\veps)}  \notag \\  
& \leq C\|\psi\|_{L^{\infty}(0,T)},
 \end{align}
since we have $\theta \frac{3\gamma}{2\gamma - 3} = \gamma$.

\vspace*{.1cm}

 \item   Let us look into the  term $I_4$, we see
 \begin{align} 
|I_4| & = \bigg| \int_0^T \psi \int_{\Omega_\veps}          \rho_\veps \uvect_\veps  \B_\veps \Big[(\theta-1) \rho^\theta_\veps \Div_x \uvect_\veps - (\theta-1) \frac{1}{|\Omega_\veps|}\int_{\Omega_\veps}\rho^\theta_\veps \Div_x \uvect_\veps \Big]    \bigg| \notag \\
& \leq \int_0^T |\psi|  \|\rho_\veps\|_{L^\gamma(\Omega_\veps)} \|\uvect_\veps \|_{L^6(\Omega_\veps; \R^3)}  \notag \\
& \qquad \times \Big\|  \B_\veps \Big[(\theta-1) \rho^\theta_\veps \Div_x \uvect_\veps - (\theta-1) \frac{1}{|\Omega_\veps|}\int_{\Omega_\veps}\rho^\theta_\veps \Div_x \uvect_\veps \Big]            \Big\|_{L^{\frac{6\gamma}{5\gamma-6}}(\Omega_\veps; \R^3)} \notag \\
& \leq \int_0^T |\psi| \|\rho_\veps\|_{L^\gamma(\Omega_\veps)} \|\uvect_\veps \|_{L^6(\Omega_\veps; \R^3)}\notag \\
& \qquad \times \Big\|  \B_\veps \Big[(\theta-1) \rho^\theta_\veps \Div_x \uvect_\veps - (\theta-1) \frac{1}{|\Omega_\veps|}\int_{\Omega_\veps}\rho^\theta_\veps \Div_x \uvect_\veps \Big]            \Big\|_{W^{1,\frac{6\gamma}{7\gamma-6}}(\Omega_\veps; \R^3)} \notag \\
&\leq \int_0^T |\psi| \|\rho_\veps\|_{L^\gamma(\Omega_\veps)} \|\uvect_\veps \|_{L^6(\Omega_\veps; \R^3)} \| (\theta-1) \rho^\theta_\veps \Div_x \uvect_\veps  \|_{L^{\frac{6\gamma}{7\gamma-6}}(\Omega_\veps; \R^3) } \notag \\
& \leq C \|\psi\|_{L^{\infty}(0,T)}\int_0^T \|\rho_\veps\|_{L^\gamma(\Omega_\veps)} \|\uvect_\veps\|_{L^6(\Omega_\veps; \R^3)} \|\Div_x \uvect_\veps \|_{L^2(\Omega_\veps; \R^3)} \|\rho^\theta_\veps\|_{L^{\frac{3\gamma}{2\gamma-3}}(\Omega_\veps) }  \notag \\
& \leq C \|\psi\|_{L^{\infty}(0,T)} \|\rho_{\veps}\|_{L^\infty(0,T; L^\gamma(\Omega_\veps))} \|\rho^\theta_{\veps}\|_{L^\infty(0,T; L^{\frac{3\gamma}{2\gamma-3}}(\Omega_\veps))} \int_0^T \|\uvect_\veps\|^2_{W^{1,2}(\Omega_\veps; \R^3)} \notag \\
&\leq C\|\psi\|_{L^{\infty}(0,T)},
 \end{align} 
where we have used the continuous embedding $W^{1, \frac{6\gamma}{7\gamma-6}}(\Omega_\veps ; \R^3) \hookrightarrow L^{\frac{6\gamma}{5\gamma-6}}(\Omega_\veps,\R^3)$ and the fact that $\theta = \frac{2}{3}\gamma -1$.


\vspace*{.1cm}

 \item The term $I_5$ can be estimated as 
 \begin{align}
|I_5| &= \bigg|   \int_0^T \psi \int_{\Omega_\veps} \rho_\veps [\uvect_\veps \otimes \uvect_\veps ] : \nabla_x \B_\veps \left[ \rho^\theta_\veps - c(\rho_\veps) \right]   \bigg| 
\notag \\
& \leq \int_0^T |\psi| \left\|\rho_\veps |\uvect_\veps|^2\right\|_{ L^{\frac{3\gamma}{\gamma+3}}(\Omega_\veps; \R^{3}) } \| \nabla_x \B_\veps [\rho^\theta_\veps - c(\rho_\veps)] \|_{ L^{\frac{3\gamma}{2\gamma-3}}(\Omega_\veps; \R^3) }   \notag \\
& \leq \int_0^T |\psi| \|\rho_\veps\|_{L^\gamma(\Omega_\veps)} \|\uvect_\veps\|^2_{L^6(\Omega_\veps; \R^3)}  \|\B_\veps [\rho^\theta_\veps - c(\rho_\veps)] \|_{W^{1,\frac{3\gamma}{2\gamma-3}}(\Omega_\veps; \R^3) } \notag \\
& \leq \int_0^T |\psi| \|\rho_\veps\|_{L^\gamma(\Omega_\veps)} \|\uvect_\veps\|^2_{L^6(\Omega_\veps; \R^3)}\|\rho^\theta_\veps\|_{L^{\frac{3\gamma}{2\gamma-3}}(\Omega_\veps)} \leq C\|\psi\|_{L^{\infty}(0,T)} ,
 \end{align} 
by using H\"older inequality, and the estimates \eqref{uni-bound-rho}, \eqref{bound-convective-term} and \eqref{Bogovskii-esti}, where we also used the fact that $\theta = \frac{2}{3}\gamma-1$. 


\vspace*{.1cm}

\item Finally, we compute the following:
\begin{align}\label{esti-I-5}
|I_6| &= \bigg|  \int_0^T \psi \int_{\Omega_\veps} \Ss(\nabla_x \uvect_\veps) : \nabla_x \B_\veps [\rho^\theta_\veps - c(\rho_\veps)]  \bigg| \notag \\
&\leq \int_0^T |\psi| \|\Ss(\nabla_x\uvect_\veps)\|_{L^{2}(\Omega_\veps; \R^3)} \|\nabla_x\B_\veps [\rho^\theta_\veps-c(\rho_\veps)] \|_{L^{2}(\Omega_\veps; \R^3)} \notag \\
&\leq \int_0^T |\psi| \|\uvect_\veps\|_{W^{1,2}(\Omega_\veps; \R^3)} \|\B_\veps [\rho^\theta_\veps-c(\rho_\veps)] \|_{W^{1,2}(\Omega_\veps; \R^3)} \notag \\
& \leq \|\psi\|_{L^\infty(0,T)} \|\uvect_\veps\|_{L^2(0,T; W^{1,2}(\Omega_\veps; \R^3) } \|\rho^\theta_\veps \|_{L^\infty(0,T; L^2(\Omega_\veps))} \leq C\|\psi\|_{L^{\infty}(0,T)}.
\end{align} 
\end{itemize}

\vspace*{.2cm}

Summing up the estimates \eqref{esti-I-1}--\eqref{esti-I-5} in \eqref{modi-pres-frm-momen}, we obtain the required bound \eqref{pressure est} by choosing $\psi = \psi_m$ with $\psi_m \in C_c^{\infty}(0,T)$ with $\psi_m = 1$ in $\left(\frac{1}{m} , T - \frac{1}{m}\right)$ and $|\psi_t|\le 2m$, and consider $m\to \infty$.


\begin{remark}\label{Remark-uniform-bound}
 Note that, from the definition of $\Omega_\veps$ and $\Omega$, we have $\Omega\subset \Omega_\veps$ for any $\veps >0$. 
 As a consequence, all the estimates \eqref{uni-bound-rho}--\eqref{uni-bound-rho-u}, \eqref{uni-bound-u}--\eqref{bound-convective-term} and \eqref{pressure est} are uniform w.r.t. $\veps$ when the space domain is replaced by $\Omega$ instead of $\Omega_\veps$. 
\end{remark}

\subsection{Passing to the limit}\label{Sec-pass-limit}  This subsection is devoted to pass to the limit for $\veps\to 0$.

\vspace*{.2cm} 

(i) From \eqref{uni-bound-u} and Remark \ref{Remark-uniform-bound},    we have 
\begin{align}\label{weak-limit-u}
\uvect_\veps \to \uvect \ \text{ weakly in } \, L^2(0,T; W^{1,2}(\Omega;\mathbb R^3) ) .
\end{align}

 \vspace*{.2cm} 
 (ii)
Then from \eqref{uni-bound-rho} and \eqref{uni-bound-rho-u} (together with Remark \ref{Remark-uniform-bound}), we respectively have   
 \begin{align}\label{limit-L-inf-rho}
 &\rho_\veps \to \rho \ \text{ weakly$^*$ in } \, L^\infty(0,T; L^\gamma(\Omega) ), \\
 &\sqrt{\rho_\veps}   \uvect_\veps \to  \sqrt{\rho} \uvect  \ \text{ weakly$^*$ in } \, L^\infty(0,T ; L^2(\Omega;\mathbb R^3) )   \label{limit-L-inf-rho-sqrt-u} . 
 \end{align}
 Moreover, the limit \eqref{limit-L-inf-rho} can be improved to 
\begin{align}\label{strong-limit-rho}
	\rho_\veps \to \rho \ \text{ in } \, \C_{\text{weak}}([0,T]; L^\gamma(\Omega) ) .
\end{align}  
Indeed, from the continuity equation \eqref{continuity-eq} and \eqref{bound-rho-u-2}, one can check that the functions 
$\disp t \mapsto \left[\int_{\Omega} \rho_\veps \phi\right](t)$ for all $\phi \in \C^\infty_c(\Omega)$, form a bounded and equi-continuous sequence in $\C^0([0,T])$.   Consequently, by standard Arzel\`a-Ascoli theorem, we have 
\begin{align*}
\int_\Omega \rho_\veps \phi 
\to \int_\Omega  \rho \phi \ \  \text{in } \C^0([0,T]) \ \text{for any } \phi \in \C^\infty_c(\Omega)   .
\end{align*} 
Since $\rho_\veps$ satisfies 
the bound \eqref{uni-bound-rho}, the above convergence can be extended  for each $\phi \in L^{\gamma^\prime}(\Omega)$ via density argument and thus, the limit \eqref{strong-limit-rho} holds true.

\vspace*{.2cm} 
(iii)   
 Using the fact in Remark \ref{Remark-uniform-bound}, the bound \eqref{bound-rho-u-2}, and the limits  \eqref{weak-limit-u} and \eqref{strong-limit-rho}, we have
 %
\begin{align}\label{weak-limit-rho-u} 
	\rho_\veps \uvect_\veps \to \rho \uvect \, \text{ weakly$^*$ in } L^\infty(0,T; L^{\frac{2\gamma}{\gamma+1}}(\Omega; \mathbb R^3)) .
\end{align}



\vspace*{.2cm}

(iv)  On the other hand, thanks to the bound \eqref{bound-convective-term}, the convective term satisfies 
\begin{align}
\rho_\veps [\uvect_\veps \otimes \uvect_\veps] \to \overline{\rho [\uvect \otimes \uvect]} \ \text{ weakly in } \, L^2(0,T;   L^{\frac{6\gamma}{4\gamma +3}}(\Omega; \mathbb R^{3\times 3})   )  ,
\end{align} 
where ``bar" denotes the weak limit of the associated composed function. 
 Now, we have to show that 
\begin{align}\label{pointwise-convective} 
\overline{\rho [\uvect \otimes \uvect]} = {\rho [\uvect \otimes \uvect]} \ \text{ a.e. in } (0,T)\times \Omega. 
\end{align} 
Indeed, from the momentum equation \eqref{momentum-eq} and the uniform bounds established in Section \ref{section-uniform-bound}, one can find  that 
the functions 
\begin{align*}
\left\{t \mapsto \int_{\Omega} \rho_\veps \uvect_\veps \cdot \boldphi    \right\} \text{ are equi-continuous and bounded in } \C^0([0,T]) \text{ for any } \boldphi \in \C_c^\infty(\Omega; \R^3).
\end{align*} 
Consequently, by Arzel\`{a}-Ascoli theorem, we deduce  that 
\begin{align*}
\rho_\veps \uvect_\veps \to \rho \uvect \ \text{ in } \, \C_{\textnormal{weak}}([0,T]; L^{\frac{2\gamma}{\gamma+1}}(\Omega; \mathbb R^3)) . 
\end{align*} 
On the other hand, since $L^{\frac{2\gamma}{\gamma+1}}(\Omega)$ is compactly embedded into $W^{-1,2}(\Omega)$, we infer that 
\begin{align*}  
\rho_\veps \uvect_{\veps} \to \rho \uvect \ \text{ strongly in } \, \C_{\textnormal{weak}}([0,T]; W^{-1,2}(\Omega; \R^3) ) .
\end{align*} 
The above limit, together with the weak convergence of the velocities $\{\uvect_\veps\}_{\veps>0}$ in the space $L^2(0,T; W^{1,2}(\Omega; \R^3))$ (see \eqref{weak-limit-u}) give rise to \eqref{pointwise-convective}.



\vspace*{.2cm}

(v) {\bf Pointwise convergence of the density.} 
Next, we show pointwise convergence of the sequence $\{\rho_\veps\}_{\veps>0}$ in $(0,T)\times \Omega$. 

 We  denote 
 $T_k(\rho_\veps)=\min\{\rho_\veps, k\}$.   
Similar to the analysis in \cite{Feireisl-Indiana-2004}, one can find the {\em effective viscous pressure identity}:
\begin{align}\label{effective-idendity}
\overline{p(\rho_\veps) T_k(\rho_\veps)} - \overline{p(\rho)} \ \overline{T_k(\rho)}  = \frac{4}{3} \mu \left( \overline{T_k(\rho) \Div_x \uvect} - \overline{T_{k}(\rho)} \, \Div_x \uvect \right),
\end{align} 
which holds in the domain $(0,T)\times \Omega$.

Now, following \cite{Feireisl-Indiana-2004} (see also \cite[Chapter 6]{Eduard-book-Oxford}), we introduce the
 {\em oscillations defect measure} 
 \begin{align*}
\textbf{osc}_q [\rho_\veps \to \rho]((0,T)\times\Omega)) = \sup_{k\geq 0} \left( \limsup_{\veps \to 0} \int_0^T \int_\Omega \left|T_k(\rho_\veps) - T_k(\rho) \right|^q \right) ,
 \end{align*} 
 and then use \eqref{effective-idendity} (as well as \eqref{pressure est}) to conclude that 
 \begin{align*}
\textbf{osc}_{\gamma+1}[\rho_\veps \to \rho]((0,T)\times\Omega))  \leq C, 
 \end{align*} 
where the constant is independent of $\veps$. This implies the desired conclusion 
\begin{align}\label{a.e.limit-rho}
\rho_\veps \to \rho \, \text{ a.e. in } (0,T) \times \Omega  
\end{align} 
by virtue of the procedure developed in \cite[Chapter 6]{Eduard-book-Oxford}.

\vspace*{.2cm}

(vi) {\bf Limit of the pressure terms.} Finally, from the estimate \eqref{pressure est}, the pressure terms satisfy 
\begin{align}
p(\rho_\veps)  \to \overline{p(\rho)}  \ \text{ weakly in } \, L^1((0,T)\times \Omega).
\end{align}
But due to a.e. convergence of the density \eqref{a.e.limit-rho},  it follows that 
\begin{align*}
\overline{p(\rho)}  = p(\rho)  .
\end{align*}

\subsection{Recovering the no-slip boundary condition}  Let us discuss about the boundary condition satisfied by the limiting velocity $\uvect$. 

\begin{itemize} 
    \item[--]  Recall that $\uvect_\veps$ satisfies 
$\uvect_\veps |_{\{x_3 = 0\}}=0$. Thus, from the weak limit \eqref{weak-limit-u}, one has  (since $\uvect \in L^2(0,T; W^{1,2}(\Omega; \mathbb R^3))$)
\begin{align}\label{bdry-1}
\uvect(t, \cdot)|_{\{x_3=0\}} = 0  \, \text{ for a.a. } t \in (0,T).  
\end{align}

\item[--] Next, our goal is to show that on the part $\{x_3=1\}$ of the boundary of the target domain $\Omega=\T^2\times (0,1)$, the limiting velocity $\uvect$ satisfies the no-slip condition. 

Indeed, since 
$\uvect_\veps\in L^2(0,T; W^{1,2}(\Omega_\veps; \R^3))$ and 
\begin{align*}
\uvect_\veps (t, \cdot)|_{\{x_3=0\}} = 0 \text{ and  } \uvect_\veps(t, \cdot) \cdot \mathbf n |_{\{x_3=1+\Phi_\veps(x_1,x_2)\}} = 0 \, \text{ for a.a. } t \in (0,T),   
\end{align*} 
by virtue of  Lemma \ref{Lemma-boundary}, there exists some $\veps_0>0$ such that one has 
\begin{align*}
\int_{\{x_3=1\}} |\uvect_\veps(t, \cdot)|^2 d \sigma \leq \veps c_1 \|\nabla_x \uvect_\veps(t, \cdot) \|^2_{L^2(\Omega_\veps; \R^3)} \quad \forall \veps \in (0,\veps_0) \text{ for a.a. } t \in (0,T).
\end{align*} 
Integrating w.r.t. $t \in (0,T)$, we have 
\begin{align*}
\int_0^T \int_{\{x_3=1\}} |\uvect_{\veps}(t, \cdot)|^2 d \sigma \leq \veps c_1 \| \uvect_\veps \|^2_{L^2(0,T;  W^{1,2}(\Omega_\veps; \R^3) )} \quad \forall \veps \in (0,\veps_0).
\end{align*}
Hence, as $\veps \to 0$, from the above inequality as  well as  the weak convergence of $\{\uvect_\veps\}_{\veps}$ to $\uvect$ in $L^2(0,T; W^{1,2}(\Omega; \R^3))$, we deduce that  
\begin{align}\label{bdry-2}
\uvect(t, \cdot)|_{\{x_3=1\}} = 0  \, \text{ for a.a. } t \in (0,T) .  
\end{align} 
\end{itemize}


\subsection{Resultant weak formulations} 
\label{subsection:result_weak_formulation}
Thanks to  the limiting behaviors analyzed in Section \ref{Sec-pass-limit}, and the boundary conditions recovered in \eqref{bdry-1}--\eqref{bdry-2}, one can obtain  the weak formulations of the concerned Navier-Stokes system in the target domain $(0,T)\times \Omega$,  as  given in Definition \ref{Def-limit-sol}.

\section*{Acknowledgments}
K. Bhandari and \v{S}. Ne{\v{c}}asov{\'{a}} received funding from   the  Praemium Academiae of \v{S}\'arka  Ne{\v{c}}asov{\'{a}}.  \v{S}. Ne\v{c}asov\'a  was also supported by the Czech Science Foundation (GA\v CR) through project  GA22-01591S. The work of  M. A. Rodr\'{\i}guez-Bellido  was partially supported by project US-1381261 (US/ JUNTA/ FEDER, UE) with the participation of FEDER. 
Finally, the Institute of Mathematics, CAS is supported by RVO:67985840.

\bibliographystyle{siam}
\bibliography{ref_rough}
\end{document}